\author{Alexei Yu. Karlovich}
\address{%%
Centro de Matem\'atica e Aplica\c{c}\~oes (CMA) and
Departamento de Matem\'atica,
Faculdade de Ci\^encias e Tecnologia,
Universidade Nova de Lisboa,
Quinta da Torre,
2829--516 Caparica,
Portugal}
\email{oyk@fct.unl.pt}
\thanks{This work was partially supported by the Funda\c{c}\~ao para a Ci\^encia e a
Tecnologia (Portu\-guese Foundation for Science and Technology)
through the project UID/MAT/00297/2013 (Centro de Matem\'atica e Aplica\c{c}\~oes).}
\subjclass[2010]{Primary 43A85, Secondary 46E30}
\keywords{%
Hardy-Littlewood maximal operator,
Banach function space,
associate space,
space of homogeneous type,
dyadic cubes.
}
\title[Hardy-Littlewood maximal operator]
{Hardy-Littlewood Maximal Operator on the Associate Space of a 
Banach Function Space}
 \newtheorem{theorem}{Theorem}
 \newtheorem{lemma}[theorem]{Lemma}
 \newtheorem{corollary}[theorem]{Corollary}
 \theoremstyle{remark}
 \newtheorem{definition}[theorem]{Definition}
\numberwithin{equation}{section}
\newcommand{\N}{\mathbb{N}}
\newcommand{\R}{\mathbb{R}}
\newcommand{\Z}{\mathbb{Z}}
\newcommand{\cA}{\mathcal{A}}
\newcommand{\cB}{\mathcal{B}}
\newcommand{\cD}{\mathcal{D}}
\newcommand{\cE}{\mathcal{E}}
\newcommand{\eps}{\varepsilon}
\renewcommand{\kappa}{\varkappa}
\begin{document}

%%%----------------------------------------------------------------------------
\begin{abstract}
Let $\mathcal{E}(X,d,\mu)$ be a Banach function space over a space of 
homogeneous type $(X,d,\mu)$. We show that if the Hardy-Littlewood maximal 
operator $M$ is  bounded on the space $\mathcal{E}(X,d,\mu)$, then its 
boundedness on the associate space $\mathcal{E}'(X,d,\mu)$ is equivalent to 
a certain condition $\mathcal{A}_\infty$. This result extends a theorem by 
Andrei Lerner from the Euclidean setting of $\mathbb{R}^n$ to the setting of 
spaces of homogeneous type.
\end{abstract}
\maketitle
%%%----------------------------------------------------------------------------
\section{Introduction.}
We begin with the definition of a space of homogeneous type (see, e.g., 
\cite{C90a}). Given a set $X$ and a function $d:X\times X\to[0,\infty)$, one 
says that $(X,d)$ is a quasi-metric space if the following axioms hold:
\begin{enumerate}
\item[(a)] $d(x,y)=0$ if and only if $x=y$;
\item[(b)] $d(x,y)=d(y,x)$ for all $x,y\in X$;
\item[(c)]  for all $x,y,z\in X$ and some 
constant $\kappa\ge 1$,
%%%
\begin{equation}\label{eq:quasi-triangle}
d(x,y)\le \kappa(d(x,y)+d(y,z)).
\end{equation}
%%%
\end{enumerate}
For $x\in X$ and $r>0$, consider the ball $B(x,r)=\{y\in X:d(x,y)<r\}$
centered at $x$ of radius $r$. Given a quasi-metric space $(X,d)$ and a 
positive measure $\mu$ that is defined on the $\sigma$-algebra generated by
quasi-metric balls, one says that $(X,d,\mu)$ is a space of homogeneous type
if there exists a constant $C_\mu\ge 1$ such that for any $x\in X$ and any 
$r>0$,
%%%
\begin{equation}\label{eq:doubling-measure}
\mu(B(x,2r))\le C_\mu \mu(B(x,r)).
\end{equation}
%%%
To avoid trivial measures, we will always assume that $0<\mu(B)<\infty$
for every ball $B$. Consequently, $\mu$ is a $\sigma$-finite measure.

Given a complex-valued function $f\in L^1_{\rm loc}(X,d,\mu)$, we define its 
Hardy-Littlewood maximal function $Mf$ by 
\[
(Mf)(x):=\sup_{B\ni x}\frac{1}{\mu(B)}\int_B |f(x)|\,d\mu(x),
\quad x\in X,
\]
where the supremum is taken over all balls $B\subset X$ containing $x\in X$.
The Hardy-Littlewood maximal operator $M$ is a sublinear operator acting by
the rule $f\mapsto Mf$. The aim of this paper is the studying the relations
between the boundedness of the operator $M$ on a so-called Banach function
space $\cE(X,d,\mu)$ and on its associate space $\cE'(X,d,\mu)$ in the setting
of general spaces of homogeneous type $(X,d,\mu)$.

Let us recall the definition of a Banach function space (see, e.g., 
\cite[Chap.~1, Definition~1.1]{BS88}).
Let $L^0(X,d,\mu)$ denote the set of all complex-valued measurable functions
on $X$ and let $L_+^0(X,d,\mu)$ be the set of all non-negative measurable 
functions on $X$. The characteristic function of a set $E\subset X$ is denoted 
by $\chi_E$. A mapping $\rho: L_+^0(X,d,\mu)\to [0,\infty]$ is called 
a Banach function norm if, for all functions $f,g, f_n\in L_+^0(X,d,\mu)$ 
with $n\in\N$, for all constants $a\ge 0$, and for all measurable subsets 
$E$ of $X$, the following  properties hold:
%%%
\begin{eqnarray*}
{\rm (A1)} & &
\rho(f)=0  \Leftrightarrow  f=0\ \mbox{a.e.},
\
\rho(af)=a\rho(f),
\
\rho(f+g) \le \rho(f)+\rho(g),\\
{\rm (A2)} & &0\le g \le f \ \mbox{a.e.} \ \Rightarrow \ 
\rho(g) \le \rho(f)
\quad\mbox{(the lattice property)},\\
{\rm (A3)} & &0\le f_n \uparrow f \ \mbox{a.e.} \ \Rightarrow \
       \rho(f_n) \uparrow \rho(f)\quad\mbox{(the Fatou property)},\\
{\rm (A4)} & & \mu(E)<\infty\ \Rightarrow\ \rho(\chi_E) <\infty,\\
{\rm (A5)} & &\int_E f(x)\,d\mu(x) \le C_E\rho(f)
\end{eqnarray*}
%%%%
with a constant $C_E \in (0,\infty)$ that may depend on $E$ and 
$\rho$,  but is independent of $f$. When functions differing only on 
a set of measure  zero are identified, the set $\cE(X,d,\mu)$ of all 
functions $f\in L^0(X,d,\mu)$ for  which  $\rho(|f|)<\infty$ is called a 
Banach function space. For each $f\in \cE(X,d,\mu)$, the norm of $f$ is 
defined by 
\[
\|f\|_\cE :=\rho(|f|). 
\]
The set $\cE(X,d,\mu)$ under the natural 
linear space operations and under  this norm becomes a Banach space (see 
\cite[Chap.~1, Theorems~1.4 and~1.6]{BS88}). 
If $\rho$ is a Banach function norm, its associate norm 
$\rho'$ is defined on $L_+^0(X,d,\mu)$ by
\[
\rho'(g):=\sup\left\{
\int_X f(x)g(x)\,d\mu(x) \ : \ 
f\in L_+^0(X,d,\mu), \ \rho(f) \le 1
\right\}.
\]
It is a Banach function norm itself \cite[Chap.~1, Theorem~2.2]{BS88}.
The Banach function space $\cE'(X,d,\mu)$ determined by the Banach function 
norm $\rho'$ is called the associate space (K\"othe dual) of $\cE(X,d,\mu)$. 

Hyt\"onen and Kairema \cite{HK12}, developing further ideas of Christ
\cite{C90a}, show that a space of homogeneous type $(X,d,\mu)$ can be equipped
with a finite system of adjacent dyadic grids $\{\cD^t:t=1,\dots,K\}$,
each of which consists of sets $Q$, called dyadic cubes, that resemble
properties of usual dyadic cubes in $\mathbb{R}^n$. We postpone precise 
formulations of these results until Section~\ref{sec:dyadic-decomposition}.

Given a dyadic grid  $\cD\in\bigcup_{t=1}^K \cD^t$, a sparse family 
$S\subset\cD$ is a collection  of dyadic cubes $Q\in\cD$ for which there 
exists a collection of sets $\{E(Q)\}_{Q\in S}$ such that the sets $E(Q)$ are 
pairwise disjoint, $E(Q)\subset Q$, and 
\[
\mu(Q)\le 2\mu(E(Q)).
\]
%%%----------------------------------------------------------------------------
\begin{definition}[The condition $\cA_\infty$]
\label{def:A-infinity}
Following \cite{L17},
we say that a Banach function space $\cE(X,d,\mu)$ over a space of homogeneous 
type $(X,d,\mu)$ satisfies the condition $\cA_\infty$ if there exist constants 
$C,\gamma>0$ such that for every dyadic grid $\cD\in\bigcup_{t=1}^K\cD^t$,
every finite sparse family $S\subset\cD$, every collection of non-negative
numbers $\{\alpha_Q\}_{Q\in S}$, and every collection of pairwise disjoint
measurable sets $\{G_Q\}_{Q\in S}$ such that $G_Q\subset Q$, one has
%%%
\begin{equation}\label{eq:A-infty}
\Bigg\|\sum_{Q\in S}\alpha_Q\chi_{G_Q}\Bigg\|_\cE
\le 
C\left(\max_{Q\in S}\frac{\mu(G_Q)}{\mu(Q)}\right)^\gamma
\Bigg\|\sum_{Q\in S}\alpha_Q\chi_Q\Bigg\|_\cE.
\end{equation}
\end{definition}
%%%----------------------------------------------------------------------------
The main aim of the present paper is to provide a self-contained proof of
the following generalization of \cite[Theorem~3.1]{L17} from the Euclidean
setting of $\R^n$ to the setting of spaces of homogeneous type.
%%%----------------------------------------------------------------------------
\begin{theorem}[Main result]
\label{th:main}
Let $\cE(X,d,\mu)$ be a Banach function space over a space of homogeneous type 
$(X,d,\mu)$ and let $\cE'(X,d,\mu)$ be its associate space. 
\begin{enumerate}
\item[{\rm(a)}]
If the Hardy-Littlewood maximal operator $M$ is bounded on the space
$\cE'(X,d,\mu)$, then the space $\cE(X,d,\mu)$ satisfies the condition 
$\cA_\infty$.

\item[{\rm(b)}]
If the Hardy-Littlewood maximal operator $M$ is bounded on the space 
$\cE(X,d,\mu)$ and the space $\cE(X,d,\mu)$ satisfies the condition 
$\cA_\infty$, then the operator $M$ is bounded on the space $\cE'(X,d,\mu)$.
\end{enumerate}
\end{theorem}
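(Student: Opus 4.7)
The proof is by associate duality combined with a sparse form analysis adapted to the Hyt\"onen--Kairema dyadic structure. The key tools I would use throughout are: (i) the identity $\|h\|_\cE=\sup\{\int h g\,d\mu : g\ge 0,\ \|g\|_{\cE'}\le 1\}$, valid thanks to the Fatou property of $\cE$; (ii) the pointwise replacement $M\lesssim \sum_{t=1}^K M^{\cD^t}$ of the quasi-metric maximal operator by dyadic maximal operators $M^\cD$ along the adjacent grids; (iii) a principal-cube stopping-time that produces, for any nonnegative $g$, a sparse $S\subset\cD$ together with pairwise disjoint $E(Q)\subset Q$ such that $M^\cD g\le C\sum_{Q\in S} g_Q\chi_{E(Q)}$ and $\mu(Q)\le 2\mu(E(Q))$.

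For part~(a), I would fix the sparse family $S$, the numbers $\{\alpha_Q\}$, and the pairwise disjoint sets $\{G_Q\}$ from the definition of $\cA_\infty$, and set $\eta=\max_Q\mu(G_Q)/\mu(Q)$. By associate duality, pick a near-extremal $g\in\cE'$, $g\ge 0$, with $\|g\|_{\cE'}\le 1$. Using the assumed boundedness of $M$ on $\cE'$, the Rubio de Francia iteration algorithm produces a majorant $\widetilde g\ge g$ with $\|\widetilde g\|_{\cE'}\lesssim 1$ and $M\widetilde g\lesssim \widetilde g$; the resulting $A_1$ weight lies in $A_\infty$, so it satisfies the reverse-H\"older-type estimate $\widetilde g(G)/\widetilde g(Q)\le C(\mu(G)/\mu(Q))^\gamma$ for every measurable $G\subset Q$. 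Applied to $G=G_Q$, this gives $\int_{G_Q}g\,d\mu\le \widetilde g(G_Q)\le C\eta^\gamma (\widetilde g)_Q\mu(Q)$. Summing using sparseness, the $A_1$ pointwise bound $(\widetilde g)_Q\lesssim\widetilde g(x)$ on $Q$, and an $\cE$--$\cE'$ H\"older pairing of $\widetilde g$ against $\sum_Q\alpha_Q\chi_Q$ completes the $\cA_\infty$ inequality.

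For part~(b), associate duality reduces the task to bounding $\int Mg\cdot f\,d\mu$ for $g\in\cE'$ and $f\in\cE$ nonnegative with $\|f\|_\cE\le 1$. After passing to one dyadic grid $\cD$ via the adjacent-grid reduction, the sparse domination of $M^\cD g$ yields $\int M^\cD g\cdot f\lesssim\sum_{Q\in S} g_Q\int_{E(Q)} f$. To handle this bilinear sparse form without falling back into the very bound $M:\cE'\to\cE'$ we are proving, I would run a second principal-cube stopping-time on $f$ inside each $Q$, producing nested layers $G_Q^j\subset Q$ of geometrically decreasing relative measure $\mu(G_Q^j)/\mu(Q)\le\delta^j$. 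On each layer, the $\cA_\infty$ hypothesis on $\cE$ gives an $\cE$-norm gain $(\delta^j)^\gamma$; paired via H\"older against the $\cE'$-norm of the corresponding $g$-piece, and using $\|Mf\|_\cE\lesssim\|f\|_\cE$ to control the top ("principal") layer, the resulting geometric series in $j$ sums to $\|g\|_{\cE'}\|f\|_\cE$.

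The hard part is step~(b). The main obstacle is organising the two stopping-time decompositions (on $g$ and on $f$) so that (\emph{a}) the layer sets are pairwise disjoint within $\cD$, as required by $\cA_\infty$; (\emph{b}) the $\cA_\infty$-gain $\delta^{j\gamma}$ is strictly stronger than the growth introduced by the stopping-time levels; and (\emph{c}) all constants remain uniform across the $K$ adjacent grids $\cD^t$. The doubling property of $\mu$ and the geometry of the Hyt\"onen--Kairema cubes enter crucially in estimating the relative measures of the layers. Lerner's argument in $\R^n$ navigates exactly this combinatorial obstacle using the standard dyadic structure; our task is to carry through the same strategy in the Hyt\"onen--Kairema framework.
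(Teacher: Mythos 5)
Your part (a) is, in outline, exactly the paper's argument: Rubio de Francia iteration on a dual function $g$ with $\|g\|_{\cE'}\le 1$, the observation that the majorant lies in $A_1^\cD$ with constant controlled by $\|M\|_{\cB(\cE')}$ (hence \emph{uniformly} in $g$, which is what makes the exponent $\gamma$ in the reverse H\"older estimate admissible in the definition of $\cA_\infty$), the resulting estimate $\widetilde g(G_Q)/\widetilde g(Q)\le C(\mu(G_Q)/\mu(Q))^{\gamma}$, and then H\"older's inequality together with the Lorentz--Luxemburg identity $\cE''=\cE$. No objection there.

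Part (b), however, has a genuine gap at precisely the place you yourself call ``the hard part.'' Your reduction to the bilinear sparse form $\sum_{Q\in S}g_Q\int_{E(Q)}f$ (with $S$ coming from the Calder\'on--Zygmund decomposition of the $\cE'$-function $g$, as in Lemma~\ref{le:estimate-for-dyadic-maximal}) agrees with the paper, but the mechanism you propose for estimating it --- a \emph{second} stopping time on the test function $f$ inside each $Q$, a layerwise application of $\cA_\infty$, and control of the ``top layer'' by $\|Mf\|_\cE\lesssim\|f\|_\cE$ --- is not carried out, and as described it does not close. Two concrete obstructions. First, the condition $\cA_\infty$ requires one set $G_Q$ per cube, pairwise disjoint \emph{across the whole family} $S$; layers produced inside each $Q$ by a stopping time on $f$ have no reason to be disjoint across the different, heavily overlapping cubes of $S$, and nothing in your sketch restores this. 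Second, the right-hand side of \eqref{eq:A-infty} is $\|\sum_Q\alpha_Q\chi_Q\|_\cE$; since the cubes of a sparse family have unbounded overlap, this sum is not pointwise dominated by $M^\cD f$, and bounding it by $\|f\|_\cE$ is essentially equivalent to the boundedness on $\cE$ of the dual sparse operator $A_S^*f=\sum_Q\big(\mu(Q)^{-1}\int_{E(Q)}f\big)\chi_Q$ --- which is exactly what must be proved --- so your geometric series in $j$ does not sum to $\|g\|_{\cE'}\|f\|_\cE$ without circularity.

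The paper resolves both points with a device absent from your plan: the layers are not produced by a new stopping time on $f$, but are taken to be $G_{Q_j^k}=Q_j^k\cap(\Omega_{k+\ell}\setminus\Omega_{k+\ell+1})$, where $\Omega_k=\{M^\cD g>a^k\}$ are the Calder\'on--Zygmund level sets of the \emph{decomposed} $\cE'$-function itself. For each fixed $\ell$ these sets are automatically pairwise disjoint in $(j,k)$, and Corollary~\ref{co:CZ-corollary} gives the uniform relative-measure decay $\mu(Q_j^k\cap\Omega_{k+\ell})\le\mu(Q_j^k)/(a^\ell\eps)$. The complementary piece $\sum_{j,k}\alpha_{Q_j^k}\chi_{Q_j^k\setminus\Omega_{k+\nu}}$ has overlap at most $\nu$ and is bounded pointwise by $\nu\,M^\cD f$, which is where $\|M\|_{\cB(\cE)}$ enters; the deep-layer piece is estimated by $\cA_\infty$ and then \emph{absorbed}, because the right-hand side of the $\cA_\infty$ estimate is exactly the norm $\|A_{S_t}^*f\|_\cE$ being bounded (one first truncates to finite subfamilies $S_t$ so that this norm is finite, and recovers the full operator by the Fatou property; one also works first with $g\in L^1\cap\cE'$ and removes this restriction at the end). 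Without this absorption step, or a genuine substitute for it, your argument for (b) remains incomplete at its key point.
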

%%%----------------------------------------------------------------------------
The paper is organized as follows. In Section~\ref{sec:dyadic-decomposition},
we formulate results of Hyt\"onen and Kairema \cite{HK12} on a construction
of a system of adjacent dyadic grids $\{\cD^t:t=1,\dots,K\}$ on the underlying
space of homogeneous type $(X,d,\mu)$. In 
Section~\ref{sec:reverse-Hoelder-inequality}, we prove that if a weight $w$ 
belongs to the dyadic class $A_1^\cD$ with $\cD\in\bigcup_{t=1}^K\cD^t$, then 
it satisfies a reverse H\"older inequality. 
Section~\ref{sec:Calderon-Zygmund-decomposition} contains a proof of a version 
of the Calder\'on-Zygmund decomposition of spaces of homogeneous type. Armed 
with these auxiliary results, following ideas of Lerner \cite[Theorem~3.1]{L17},
we give a self-contained proof of Theorem~\ref{th:main} in  
Sections~\ref{sec:proof-necessity} and \ref{sec:proof-sufficiency}.

We conjecture that reflexive variable Lebesgue spaces 
(see, e.g., \cite{CDF09,CS18,D05, L17} for definitions)
over spaces of homogeneous type 
satisfy the condition $\cA_\infty$. If this conjecture is true, then
in view of Theorem~\ref{th:main}, we can affirm that the Hardy-Littlewood 
maximal operator is bounded on a reflexive variable Lebesgue
space over a space of homogeneous type if and only if it is bounded on its 
associate space. Note that in the Euclidean setting of $\R^n$, this result
was proved by Diening \cite[Theorem~8.1]{D05} (see also 
\cite[Theorem~1.1]{L17}). We are going to embark on the proof of the above 
conjecture in a forthcoming paper.
%%%----------------------------------------------------------------------------
\section{Dyadic decomposition of spaces of homogeneous type.}
\label{sec:dyadic-decomposition}
Let $(X,d,\mu)$ be a space of homogeneous type. The doubling property of $\mu$ 
implies the following geometric doubling property of the quasi-metric $d$:
any ball $B(x,r)$ can be covered by at most $N:=N(C_\mu,\kappa)$ balls of 
radius $r/2$. It is not difficult to show that $N\le C_\mu^{6+3\log_2\kappa}$.

An important tool for our proofs is the concepts of an adjacent system of 
dyadic grids $\cD^t$, $t\in\{1,\dots,K\}$, on  a space of homogeneous type 
$(X,d,\mu)$. Christ \cite[Theorem~11]{C90a} (see also 
\cite[Chap.~VI, Theorem~14]{C90b}) 
constructed a system of sets on  $(X,d,\mu)$, which satisfy many of the 
properties of a system of dyadic cubes on the Euclidean space. His 
construction was further refined by Hyt\"onen and 
Kairema  \cite[Theorem~2.2]{HK12}. We will use the version from 
\cite[Theorem~4.1]{AHT17}.
%%%----------------------------------------------------------------------------
\begin{theorem}\label{th:Hytonen-Kairema}
Let $(X,d,\mu)$ be a space of homogeneous type with the constant $\kappa\ge 1$
in inequality \eqref{eq:quasi-triangle} and the geometric doubling constant $N$.
Suppose the parameter $\delta\in(0,1)$ satisfies $96\kappa^2\delta\le 1$. Then 
there exist an integer number $K=K(\kappa,N,\delta)$, a countable set of points
$\{z_\alpha^{k,t}:\alpha\in\cA_k\}$ with $k\in\Z$ and  $t\in\{1,\dots,K\}$, 
and a finite number of dyadic grids 
\[
\cD^t:=\{Q_\alpha^{k,t}:k\in\Z,\alpha\in\cA_k\}, 
\]
such that the following properties are fulfilled:
\begin{enumerate}
\item[{\rm(a)}]
for every $t\in\{1,\dots,K\}$ and $k\in\Z$ one has
\begin{enumerate}
\item[{\rm (i)}]
$X=\bigcup_{\alpha\in\cA_k} Q_\alpha^{k,t}$ (disjoint union);

\item[{\rm (ii)}]
if $Q,P\in\cD^t$, then $Q\cap P\in\{\emptyset, Q,P\}$;

\item[{\rm (iii)}]
if $Q_\alpha^{k,t}\in\cD^t$, then
%%%
\begin{equation}\label{eq:dyadic-cubes}
B(z_\alpha^{k,t},c_1\delta^k)
\subset 
Q_\alpha^{k,t}
\subset 
B(z_\alpha^{k,t},C_1\delta^k),
\end{equation}
%%%
where $c_1=(12\kappa^4)^{-1}$ and $C_1:=4\kappa^2$;
\end{enumerate}

\item[{\rm(b)}]
for every $t\in\{1,\dots,K\}$ and every $k\in\Z$, if $Q_\alpha^{k,t}\in\cD^t$, 
then there exists at least one $Q_\beta^{k+1,t}\in\cD^t$, which is called a 
child of $Q_\alpha^{k,t}$, such that  $Q_\beta^{k+1,t}\subset Q_\alpha^{k,t}$, 
and there exists exactly one $Q_\gamma^{k-1,t}\in\cD^t$, which is 
called the parent of $Q_\alpha^{k,t}$, such that 
$Q_\alpha^{k,t}\subset Q_\gamma^{k-1,t}$;

\item[{\rm (c)}]
for every ball $B=B(x,r)$, there exists 
\[
Q_B\in\bigcup_{t=1}^K\cD^t
\]
such that $B\subset Q_B$ and $Q_B=Q_\alpha^{k-1,t}$ for some indices 
$\alpha\in\cA_k$ and $t\in\{1,\dots,K\}$, where $k$ is 
the unique integer such that $\delta^{k+1}<r\le\delta^k$.
\end{enumerate}
\end{theorem}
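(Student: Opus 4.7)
The plan is to follow Christ's construction of a single dyadic grid and then extend it to an adjacent system in the manner of Hytönen--Kairema. For each fixed $t\in\{1,\dots,K\}$ and each scale $k\in\Z$, I would begin by choosing a maximal $\delta^k$-separated subset $\{z_\alpha^{k,t}:\alpha\in\cA_k\}$ of $X$; maximality yields the covering $X=\bigcup_\alpha B(z_\alpha^{k,t},\delta^k)$, while separation combined with the geometric doubling constant $N$ bounds the number of centers in any ball of fixed radius. Next, I would define a parent map sending each scale-$(k+1)$ index to a single nearest scale-$k$ center within quasi-metric distance less than $\delta^k$ (which exists by the covering property). Iterating, every center at a finer scale inherits a unique ancestor at every coarser scale.

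With this tree structure in hand, define the cube $Q_\alpha^{k,t}$ to be the union of all points whose chain of nearest centers at scales $k'\ge k$ terminates at $z_\alpha^{k,t}$ at level $k$. Properties (a)(i), (a)(ii), and (b) then follow immediately from the uniqueness of ancestors in the tree. The main work lies in verifying the ball inclusions \eqref{eq:dyadic-cubes}. For the inner inclusion, one shows by induction on descending scales that a point within quasi-distance $c_1\delta^k$ of $z_\alpha^{k,t}$ cannot escape its ancestor class, because the cumulative quasi-triangle drift \eqref{eq:quasi-triangle} is bounded by a geometric series in $\kappa^2\delta$; the hypothesis $96\kappa^2\delta\le 1$ is precisely what forces this series to sum below the threshold $c_1=(12\kappa^4)^{-1}$. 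The outer inclusion follows by a symmetric argument bounding the possible reach of descendants by $C_1\delta^k$ with $C_1=4\kappa^2$.

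For property (c), I would construct $K=K(\kappa,N,\delta)$ distinct grids by perturbing the center configuration at each scale in finitely many coordinated ways, so that for every ball $B(x,r)$ with $\delta^{k+1}<r\le\delta^k$ there exists some $t$ for which the center $z_\alpha^{k-1,t}$ closest to $x$ is sufficiently deep inside its scale-$(k-1)$ cube to guarantee $B\subset Q_\alpha^{k-1,t}$. A pigeonhole argument over the finite shift parameter, combined with covering counts coming from the geometric doubling bound $N\le C_\mu^{6+3\log_2\kappa}$, produces both the needed $t$ and a quantitative bound for $K$. The main obstacle throughout is the quantitative bookkeeping: the sharp constants $c_1$ and $C_1$ in \eqref{eq:dyadic-cubes} require a careful iteration of \eqref{eq:quasi-triangle} against the smallness parameter $\delta$, and the adjacent-grid construction demands that the finitely many shifts simultaneously cover all possible "bad" ball positions across every scale, which is where the interaction between the doubling constant and the quasi-triangle constant is most delicate.
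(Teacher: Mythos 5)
You should first note that the paper does not prove this theorem: it is imported from the literature (Christ's construction \cite{C90a} as refined by Hyt\"onen and Kairema \cite{HK12}, quoted in the form of \cite[Theorem~4.1]{AHT17}), so there is no internal proof to compare against; what can be judged is whether your outline would itself establish the quoted statement. Your skeleton (maximal $\delta^k$-separated centers at each scale, a parent map, a tree of ancestors, cubes built from the tree, finitely many perturbed grids for the adjacent system) is the standard and correct route, but as written it has gaps at exactly the hard points. The most serious one is the definition of the cubes: declaring $Q_\alpha^{k,t}$ to be ``the union of all points whose chain of nearest centers at scales $k'\ge k$ terminates at $z_\alpha^{k,t}$'' is not well defined, because a point $x$ carries no canonical chain --- the nearest scale-$(k'+1)$ center of $x$ need not be a child of the nearest scale-$k'$ center of $x$, and the assignment obtained by starting from ever finer scales need not stabilize. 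The actual construction builds a partial order on the \emph{centers} (choosing one parent per center among all sufficiently close coarser centers), forms preliminary cubes as unions of balls over descendant centers, and then makes a careful half-open selection to obtain simultaneously the disjoint covering (a)(i), the nestedness (a)(ii), and measurability; none of this follows automatically from your description, and it is precisely this construction that produces the specific constants $c_1=(12\kappa^4)^{-1}$ and $C_1=4\kappa^2$ in \eqref{eq:dyadic-cubes} under the hypothesis $96\kappa^2\delta\le 1$. Your appeal to a geometric series in $\kappa^2\delta$ coming from iterating \eqref{eq:quasi-triangle} is the right mechanism for (a)(iii), but the claim that it yields exactly these thresholds is asserted, not derived.

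The second gap is part (c), which is the genuinely new contribution of \cite{HK12} beyond Christ's single grid. ``Perturbing the center configuration in finitely many coordinated ways and pigeonholing'' is not yet an argument: you must exhibit the $K$ auxiliary center systems explicitly, show that each of them is again an admissible dyadic system (so that parts (a) and (b) hold for every $t$), keep the number $K$ uniform over all scales $k\in\Z$ simultaneously (a pigeonhole at a single scale does not coordinate choices across scales), and then prove the actual containment $B(x,r)\subset Q_\alpha^{k-1,t}$, which requires locating, in some grid $\cD^t$, a cube of sidelength $\delta^{k-1}$ whose center is so close to $x$ that the inner-ball inclusion in \eqref{eq:dyadic-cubes} swallows the whole ball $B$. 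The interaction between the geometric doubling bound $N$, the separation of the centers, and the quasi-triangle constant $\kappa$ enters exactly here, and your sketch acknowledges this as ``delicate bookkeeping'' without carrying it out. In short, your proposal correctly identifies the known construction, but the well-definedness of the cubes, the derivation of the stated constants, and the construction and verification of the adjacent system are the substance of the theorem and remain unproven in your outline.
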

%%%----------------------------------------------------------------------------
The collections $\cD^t$, $t\in\{1,\dots,K\}$, are called dyadic grids on $X$. 
The sets $Q_\alpha^{k,t}\in\cD^t$ are referred to as dyadic cubes with center 
$z_\alpha^{k,t}$ and sidelength $\delta^k$, see \eqref{eq:dyadic-cubes}.
The sidelength of a cube $Q\in\cD^t$ will be denoted by $\ell(Q)$.
We should emphasize that these sets are not cubes in the standard sense even 
if the underlying space is $\R^n$. Parts (a) and (b) of the above theorem  
describe dyadic grids $\cD^t$, with $t\in\{1,\dots,K\}$, individually. 
In particular, 
\eqref{eq:dyadic-cubes} permits a comparison between a dyadic cube and
quasi-metric balls. Part (c) guarantees the existence of a finite family
of dyadic grids such that an arbitrary quasi-metric ball is contained in a 
dyadic cube in one of these grids. Such a finite family of dyadic grids is 
referred to as an adjacent system of dyadic grids. 

Let $\cD\in\bigcup_{t=1}^K \cD^t$ be a fixed dyadic grid. One can define the 
dyadic maximal function $M^\cD f$ of a function $f\in L_{\rm loc}^1(X,d,\mu)$
by
\[
(M^\cD f)(x)=\sup_{Q\ni x}\frac{1}{\mu(Q)}\int_Q |f(x)|\,d\mu(x),
\quad x\in X,
\]
where the supremum is taken over all dyadic cubes $Q\in\cD$ containing $x$.

The following important result is proved by Hyt\"onen and Kairema 
\cite[Proposition~7.9]{HK12}.
%%%----------------------------------------------------------------------------
\begin{theorem}\label{th:HK-pointwise}
Let $(X,d,\mu)$ be a space of homogeneous type and let $\bigcup_{t=1}^K \cD^t$ 
be the adjacent system of dyadic grids given by Theorem~\ref{th:Hytonen-Kairema}.
There exist a constant $C_{HK}(X)\ge 1$ depending only $(X,d,\mu)$ such that 
for every $f\in L_{\rm loc}^1(X,d,\mu)$ and a.e. $x\in X$, one has
\begin{align*}
(M^{\cD^t} f)(x)
&\le 
C_{HK}(X) (Mf)(x),\quad t\in\{1,\dots,K\},
\\
(Mf)(x)
&\le 
C_{HK}(X)\sum_{t=1}^K (M^{\cD^t}f)(x).
\end{align*}
\end{theorem}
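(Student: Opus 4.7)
My plan is to reduce both inequalities to the geometric comparisons between dyadic cubes and quasi-metric balls furnished by Theorem~\ref{th:Hytonen-Kairema}, combined with finitely many applications of the doubling condition \eqref{eq:doubling-measure}.

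For the upper bound $(M^{\cD^t}f)(x)\le C_{HK}(X)\,(Mf)(x)$, I would fix $t\in\{1,\dots,K\}$ and a dyadic cube $Q=Q_\alpha^{k,t}\in\cD^t$ containing $x$. The inclusions in \eqref{eq:dyadic-cubes} yield $Q\subset B(z_\alpha^{k,t},C_1\delta^k)$ together with $\mu(Q)\ge \mu(B(z_\alpha^{k,t},c_1\delta^k))$, and since the ratio $C_1/c_1$ depends only on $\kappa$, iterating \eqref{eq:doubling-measure} a bounded number of times gives $\mu(B(z_\alpha^{k,t},C_1\delta^k))\le C\,\mu(Q)$ for some $C=C(\kappa,C_\mu)$. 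Since $x\in Q\subset B(z_\alpha^{k,t},C_1\delta^k)$, it follows that
\[
\frac{1}{\mu(Q)}\int_Q |f|\,d\mu \le C\cdot\frac{1}{\mu(B(z_\alpha^{k,t},C_1\delta^k))}\int_{B(z_\alpha^{k,t},C_1\delta^k)} |f|\,d\mu \le C\,(Mf)(x),
\]
and taking the supremum over dyadic cubes $Q\in\cD^t$ with $Q\ni x$ settles this half.

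For the lower bound $(Mf)(x)\le C_{HK}(X)\sum_{t=1}^K (M^{\cD^t}f)(x)$, I would fix a ball $B=B(x,r)\ni x$ and apply part (c) of Theorem~\ref{th:Hytonen-Kairema} to produce an index $t\in\{1,\dots,K\}$ and a cube $Q_B=Q_\alpha^{k-1,t}\in\cD^t$ with $B\subset Q_B\subset B(z_\alpha^{k-1,t},C_1\delta^{k-1})$, where $\delta^{k+1}<r\le\delta^k$. Because $x\in Q_B\subset B(z_\alpha^{k-1,t},C_1\delta^{k-1})$, the quasi-triangle inequality \eqref{eq:quasi-triangle} gives
\[
B(z_\alpha^{k-1,t},C_1\delta^{k-1}) \subset B(x, 2\kappa C_1 \delta^{k-1}) \subset B\!\left(x,\frac{2\kappa C_1}{\delta^2}\,r\right),
\]
where the second inclusion uses $\delta^{k-1}<r/\delta^2$. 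Another bounded iteration of \eqref{eq:doubling-measure} then yields $\mu(Q_B)\le C'\,\mu(B)$ with $C'=C'(\kappa,\delta,C_\mu)$, so
\[
\frac{1}{\mu(B)}\int_B |f|\,d\mu \le C'\cdot\frac{1}{\mu(Q_B)}\int_{Q_B} |f|\,d\mu \le C'\,(M^{\cD^t}f)(x) \le C'\sum_{s=1}^K (M^{\cD^s}f)(x).
\]
Taking the supremum over balls $B\ni x$ and setting $C_{HK}(X):=\max(C,C')$ completes the argument.

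The proof is essentially geometric and I do not anticipate a real obstacle: once the sandwich inclusions \eqref{eq:dyadic-cubes} and the parent-cube property (c) are in hand, everything reduces to bookkeeping with the doubling constant. The only points that demand care are propagating the quasi-triangle factor $\kappa$ correctly in the second inclusion and recognising that $\delta$, being fixed by Theorem~\ref{th:Hytonen-Kairema} in terms of $\kappa$, contributes only through quantities determined by $(X,d,\mu)$.
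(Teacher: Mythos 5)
Your argument is correct in substance, but note that the paper itself does not prove this statement at all: it is quoted from Hyt\"onen and Kairema \cite[Proposition~7.9]{HK12}, so you have supplied a self-contained proof where the paper relies on a citation. What you wrote is essentially the standard argument behind that proposition: the first inequality follows from the sandwich inclusions \eqref{eq:dyadic-cubes} plus finitely many applications of \eqref{eq:doubling-measure} (the ratio $C_1/c_1=48\kappa^6$ depends only on $\kappa$), and the second from Theorem~\ref{th:Hytonen-Kairema}(c) together with the quasi-triangle inequality \eqref{eq:quasi-triangle} and doubling, with $\delta^{k-1}<r\delta^{-2}$ converting the cube's scale into a multiple of $r$. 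The only point to tighten is that the operator $M$ in the paper is the \emph{non-centered} maximal operator (the supremum runs over all balls containing $x$), whereas in your second half you only treat balls $B=B(x,r)$ centered at the point where the maximal functions are evaluated. This is a cosmetic rather than structural issue: for a general ball $B=B(y,r)\ni x$, part (c) still gives $B\subset Q_B\subset B(z_\alpha^{k-1,t},C_1\delta^{k-1})$, and since $y\in B\subset Q_B$ you may run your quasi-triangle and doubling computation with the center $y$ in place of $x$, obtaining $\mu(Q_B)\le C'\mu(B)$; the membership $x\in B\subset Q_B$ is all that is needed to conclude $\frac{1}{\mu(Q_B)}\int_{Q_B}|f|\,d\mu\le (M^{\cD^t}f)(x)$. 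With that one-line adjustment (and the observation that $\delta$ is a fixed parameter determined by $\kappa$, so the final constant depends only on $(X,d,\mu)$), your proof is complete and gives exactly the statement the paper imports from \cite{HK12}.
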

%%%----------------------------------------------------------------------------
\section{Reverse H\"older inequality.}
\label{sec:reverse-Hoelder-inequality}
A measurable non-negative locally integrable function $w$ on $X$ is said to
be a weight. Given a weight $w$ and a measurable set
$E\subset X$, denote
\[
w(E):=\int_E w(x)\,d\mu(x).
\]

Fix a dyadic grid $\cD\in\bigcup_{t=1}^K\cD^t$.
A weight $w:X\to[0,\infty]$ is said to belong to the dyadic class
$A_1^\cD$ if there exists a constant $c>0$ such that for a.e. $x\in X$,
\[
(M^\cD w)(x)\le cw(x).
\]
The smallest constant $c$ in this inequality is denoted by $[w]_{A_1^\cD}$.

Following \cite[Definition~4.4]{AHT17}, a generalized dyadic parent (gdp) of
a cube $Q$ is any cube $Q^*$ such that $\ell(Q^*)=\frac{1}{\delta^2}\ell(Q)$
and for every $Q'\in\cD$ such that $Q'\cap Q\ne\emptyset$ and $\ell(Q')=\ell(Q)$,
one has $Q'\subset Q^*$. According to \cite[Lemma~4.5]{AHT17}, every
cube $Q\in\cD$ possesses at least one gdp. 

For every $x\in X$ and $Q\in\cD$, put
%%%
\begin{align*}
\mathcal{Q}_Q:=\{Q'\in\cD\ :\ Q'\cap Q\ne\emptyset,\ \ell(Q')\le \ell(Q)\},
\
\mathcal{Q}_Q^x:=\{Q'\in\mathcal{Q}_Q\ :\ x\in Q'\}.
\end{align*}
%%%
It follows immediately that if $Q'\in\mathcal{Q}_Q$, then $Q'\subset Q^*$.
For every $Q\in\cD$, the localized dyadic maximal operator $M_Q$ is defined
by
\[
(M_Q f)(x)=\left\{
\begin{array}{lll}
\displaystyle\sup_{Q'\in\mathcal{Q}_Q^x}\frac{1}{\mu(Q')}\int_{Q'} |f(y)|\,d\mu(y)
&\mbox{if}& \mathcal{Q}_Q^x\ne\emptyset,
\\
0, &\mbox{if}& \mathcal{Q}_Q^x=\emptyset.
\end{array}\right.
\]
Following \cite[Definition~4.7]{AHT17}, one says that a weight 
$w:X\to[0,\infty]$ belongs to the dyadic class $A_\infty^\cD$ if
\[
[w]_\infty^\cD
:=
\sup_{Q\in\cD}\inf_{Q^*}\frac{1}{w(Q^*)}\int_X (M_Qw)(x)\,d\mu(x)<\infty.
\]

Let $C_\cD\ge 1$ be a constant such that for all cubes $Q\in\cD$ and 
$Q'\in\mathcal{Q}_Q$ satisfying $\ell(Q)=\ell(Q')$, one has 
%%%
\begin{equation}\label{eq:constant-CD}
\mu(Q^*)\le C_\cD\mu(Q').
\end{equation}
%%%----------------------------------------------------------------------------
\begin{lemma}\label{le:A1-A-infinity}
If $w\in A_1^\cD$, then $w\in A_\infty^\cD$ and 
$[w]_{A_\infty^\cD}\le[w]_{A_1^\cD}$.
\end{lemma}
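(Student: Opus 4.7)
The plan is to exploit two simple structural facts about the localized maximal operator $M_Q$ and then combine them with the pointwise $A_1^\cD$ inequality.

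First I would observe that the supremum defining $(M_Q f)(x)$ is taken over a subcollection of all dyadic cubes in $\cD$ containing $x$, so by definition
\[
(M_Q w)(x) \le (M^\cD w)(x) \quad \text{for a.e. } x\in X.
\]
Since $w\in A_1^\cD$, this immediately gives $(M_Q w)(x)\le [w]_{A_1^\cD}\, w(x)$ a.e.

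Second, I would use the observation already recorded in the paper that every $Q'\in\mathcal{Q}_Q$ satisfies $Q'\subset Q^*$ for any generalized dyadic parent $Q^*$ of $Q$. Consequently, if $x\notin Q^*$, then $\mathcal{Q}_Q^x=\emptyset$ and therefore $(M_Q w)(x)=0$. Hence
\[
\int_X (M_Q w)(x)\,d\mu(x) = \int_{Q^*} (M_Q w)(x)\,d\mu(x).
\]

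Combining the two bounds yields
\[
\int_X (M_Q w)(x)\,d\mu(x) \le [w]_{A_1^\cD}\int_{Q^*} w(x)\,d\mu(x) = [w]_{A_1^\cD}\, w(Q^*).
\]
Dividing by $w(Q^*)>0$ and then taking the infimum over generalized dyadic parents $Q^*$ and the supremum over $Q\in\cD$ in the definition of $[w]_{A_\infty^\cD}$ gives $[w]_{A_\infty^\cD}\le [w]_{A_1^\cD}<\infty$, which proves both that $w\in A_\infty^\cD$ and the stated quantitative bound.

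There is no serious obstacle here: the proof is essentially a one-line computation once one notes that the $A_1^\cD$ pointwise bound and the support property $\{M_Q w>0\}\subset Q^*$ transfer the global $A_1^\cD$ information to the localized quantity controlling $A_\infty^\cD$. The only mild point to verify is that $w(Q^*)>0$, which follows from $\mu(Q^*)>0$ (by the assumption that quasi-metric balls have positive measure together with \eqref{eq:dyadic-cubes}) and the fact that $w$ is a non-negative locally integrable function that, to be in $A_1^\cD$ nontrivially, cannot vanish on the ball containing $Q^*$; otherwise $w\equiv 0$ a.e.\ on $Q^*$ and the ratio is interpreted as $0$.
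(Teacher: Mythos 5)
Your proof is correct and follows essentially the same route as the paper's: the pointwise bound $M_Q w\le M^\cD w\le [w]_{A_1^\cD}w$ combined with the observation that $M_Q w$ vanishes off $Q^*$, then integration and taking the infimum over gdp's and the supremum over cubes. The paper compresses these two facts into the single inequality $(M_Qw)\chi_{Q^*}\le (M^\cD w)\chi_{Q^*}\le [w]_{A_1^\cD}w\chi_{Q^*}$, so there is no substantive difference.
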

%%%----------------------------------------------------------------------------
\begin{proof}
Fix a cube $Q\in\cD$ and one of its gdp's $Q^*$. It follows immediately from
the definition of $\mathcal{Q}_Q$ that if $Q'\in\mathcal{Q}_Q$, then
$Q'\subset Q^*$. Take any $x\in X$. If $\mathcal{Q}_Q^x\ne\emptyset$, then
there exists $Q'\in\mathcal{Q}_Q^x$ such that $x\in Q'\subset Q^*$.
Therefore, if $x\notin Q^*$, then $(M_Qw)(x)=0$. Thus, for a.e. $x\in Q$,
\[
(M_Qw)(x)=(M_Qw)(x)\chi_{Q^*}(x)
\le 
(M^\cD w)(x)\chi_{Q^*}(x)
\le 
[w]_{A_1^\cD} w(x)\chi_{Q^*}(x),
\]
whence
%%%
\begin{align*}
[w]_{A_\infty^\cD}
&=
\sup_{Q\in\cD}\inf_{Q^*}\frac{1}{w(Q^*)}\int_X (M_Q w)(x)\,d\mu(x) 
\\
&\le 
[w]_{A_1^\cD}\sup_{Q\in\cD}\inf_{Q^*}\frac{1}{w(Q^*)}\int_{Q^*}w(x)\,d\mu(x)
=[w]_{A_1^\cD},
\end{align*}
%%%
which completes the proof.
\end{proof}
%%%----------------------------------------------------------------------------
The following result is an easy consequence of the weak reverse H\"older
inequality for weights in $A_\infty^\cD$ obtained recently
by Anderson, Hyt\"onen, and Tapiola \cite[Theorem~5.4]{AHT17}.
%%%----------------------------------------------------------------------------
\begin{lemma}\label{le:RHI}
Let $K$ be the constant from Theorem~\ref{th:Hytonen-Kairema} and
$C_\cD$ be the constant defined in \eqref{eq:constant-CD}.
If $w\in A_1^\cD$, then for every $\eta$ satisfying
%%%
\begin{equation}\label{eq:RHI-1}
0<\eta\le\frac{1}{2 C_\cD^2 K[w]_{A_1^\cD}}
\end{equation}
%%%
and every $Q\in\cD$, one has
%%%
\begin{equation}\label{eq:RHI-2}
\left(\frac{1}{2\mu(Q)}\int_Q w^{1+\eta}(x)\,d\mu(x)\right)^{\frac{1}{1+\eta}}
\le 
C_\cD[w]_{A_1^\cD}\frac{1}{\mu(Q)}\int_Q w(x)\,d\mu(x).
\end{equation}
\end{lemma}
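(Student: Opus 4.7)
The plan is to deduce Lemma~\ref{le:RHI} directly from the weak reverse H\"older inequality for $A_\infty^\cD$ weights established in \cite[Theorem~5.4]{AHT17}, combined with the strengthening afforded by the stronger $A_1^\cD$ assumption on $w$.

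First, by Lemma~\ref{le:A1-A-infinity}, the weight $w$ belongs to $A_\infty^\cD$ with $[w]_{A_\infty^\cD} \le [w]_{A_1^\cD}$. Hence the threshold \eqref{eq:RHI-1} on $\eta$ is at least as restrictive as the smallness hypothesis on $\eta$ required by \cite[Theorem~5.4]{AHT17}, so the latter theorem applies to $w$. It produces, for every $Q\in\cD$ and every generalized dyadic parent $Q^*$ of $Q$, a weak reverse H\"older estimate of the shape
\[
\left(\frac{1}{\mu(Q^*)}\int_Q w^{1+\eta}(x)\,d\mu(x)\right)^{1/(1+\eta)}
\le 2\,\frac{w(Q^*)}{\mu(Q^*)},
\]
where the factor $K$ from \eqref{eq:RHI-1} enters through the presence of the adjacent-grid system underlying the AHT argument, and the factor $C_\cD^2$ enters through the geometric comparability \eqref{eq:constant-CD} between $Q$ and $Q^*$.

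Second, I would convert this into an inequality purely about $Q$. On the right-hand side, I apply the $A_1^\cD$ condition pointwise: for a.e.\ $z \in Q \subset Q^*$,
\[
\frac{w(Q^*)}{\mu(Q^*)} \le (M^\cD w)(z) \le [w]_{A_1^\cD}\,w(z),
\]
so averaging this over $z \in Q$ gives
\[
\frac{w(Q^*)}{\mu(Q^*)} \le [w]_{A_1^\cD}\,\frac{1}{\mu(Q)}\int_Q w(x)\,d\mu(x).
\]
On the left-hand side, I use $\mu(Q^*) \le C_\cD\mu(Q)$, obtained from \eqref{eq:constant-CD} with $Q' = Q$, to replace $\mu(Q^*)$ by $\mu(Q)$ under the $(1+\eta)$-th root. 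The resulting multiplicative factor $C_\cD^{1/(1+\eta)}$, together with the constant $2$ carried over from the AHT estimate, is absorbed into the $C_\cD$ appearing in front of $[w]_{A_1^\cD}$ on the right-hand side of \eqref{eq:RHI-2} and into the factor $\tfrac{1}{2}$ now placed inside the $L^{1+\eta}$ average on the left-hand side; this absorption is valid precisely because of the smallness of $\eta$ imposed by \eqref{eq:RHI-1}, which keeps $C_\cD^{1/(1+\eta)}$ bounded by a dimension-independent constant.

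The main obstacle is the bookkeeping of constants, specifically verifying that the particular threshold \eqref{eq:RHI-1}, with its factor $2C_\cD^2 K$, is exactly what is needed both to trigger the AHT weak reverse H\"older inequality and to collapse the various powers $(\cdot)^{1/(1+\eta)}$ appearing when passing between averages over $Q$ and over $Q^*$. Once these constants are matched, the argument is a short chaining of the two displayed inequalities above.
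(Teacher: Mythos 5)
Your proposal follows essentially the same route as the paper: pass from $A_1^\cD$ to $A_\infty^\cD$ via Lemma~\ref{le:A1-A-infinity}, invoke the weak reverse H\"older inequality of \cite[Theorem~5.4]{AHT17}, and then use the pointwise $A_1^\cD$ bound, integrated over $Q$, to replace the average of $w$ over $Q^*$ by the average over $Q$. The only discrepancy is your recalled shape of the AHT estimate and the ensuing constant bookkeeping, which you leave unverified; in the form actually used in the paper the cited estimate already reads $\left(\frac{1}{2\mu(Q)}\int_Q w^{1+\eta}(x)\,d\mu(x)\right)^{1/(1+\eta)}\le C_\cD\,\frac{1}{\mu(Q^*)}\int_{Q^*}w(x)\,d\mu(x)$, so combining it with your second displayed inequality gives \eqref{eq:RHI-2} at once, with no absorption of extra factors (and hence no dependence on whether $C_\cD\ge 2$) required.
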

%%%----------------------------------------------------------------------------
\begin{proof}
We know from Lemma~\ref{le:A1-A-infinity} that $w\in A_\infty^\cD$ and
$[w]_{A_\infty^\cD}\le [w]_{A_1^\cD}$. Then, by \cite[Theorem~5.4]{AHT17},
for every $\eta$ satisfying \eqref{eq:RHI-1}, one has
%%%
\begin{equation}\label{eq:RHI-3}
\left(\frac{1}{2\mu(Q)}\int_Q w^{1+\eta}(x)\,d\mu(x)\right)^{\frac{1}{1+\eta}}
\le 
C_\cD\frac{1}{\mu(Q^*)}\int_{Q^*}w(x)\,d\mu(x).
\end{equation}
%%%
Since $w\in A_1^\cD$, for a.e. $x\in Q\subset Q^*$, one has
\[
\frac{1}{\mu(Q^*)}\int_{Q^*} w(y)\,d\mu(y)
\le 
(M^\cD w)(x)
\le 
[w]_{A_1^\cD}w(x).
\]
Integrating this inequality over $Q$, we obtain
%%%
\begin{equation}\label{eq:RHI-4}
\frac{\mu(Q)}{\mu(Q^*)}\int_{Q^*}w(y)\,d\mu(y)
\le 
[w]_{A_1^\cD}\int_Q w(x)\,d\mu(x).
\end{equation}
%%%
Combining inequalities \eqref{eq:RHI-3} and \eqref{eq:RHI-4}, we immediately 
arrive at inequality \eqref{eq:RHI-2}.
\end{proof}
%%%----------------------------------------------------------------------------
The main result of this section is the following reverse H\"older inequality.
%%%----------------------------------------------------------------------------
\begin{theorem}\label{th:RHI-corollary}
Let $K$ be the constant from Theorem~\ref{th:Hytonen-Kairema} and
$C_\cD$ be the constant defined in \eqref{eq:constant-CD}.
If $w\in A_1^\cD$, then for every $\eta$ satisfying \eqref{eq:RHI-1},
every cube $Q\in\cD$, and every measurable subset $E\subset Q$, one has
%%%
\begin{equation}\label{eq:RHI-corollary}
\frac{w(E)}{w(Q)}
\le 
2^{\frac{1}{1+\eta}}C_\cD[w]_{A_1^\cD}
\left(\frac{\mu(E)}{\mu(Q)}\right)^{\frac{\eta}{1+\eta}}.
\end{equation}
\end{theorem}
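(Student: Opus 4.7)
The plan is to deduce \eqref{eq:RHI-corollary} from the reverse Hölder inequality in Lemma~\ref{le:RHI} by a single application of Hölder's inequality in the exponents $1+\eta$ and $(1+\eta)/\eta$. More specifically, I would write
\[
w(E)=\int_Q w(x)\chi_E(x)\,d\mu(x)
\]
and estimate this integral by Hölder:
\[
w(E)\le
\left(\int_Q w^{1+\eta}(x)\,d\mu(x)\right)^{\frac{1}{1+\eta}}\mu(E)^{\frac{\eta}{1+\eta}}.
\]

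Next I would substitute the conclusion of Lemma~\ref{le:RHI}, rewritten as
\[
\left(\int_Q w^{1+\eta}(x)\,d\mu(x)\right)^{\frac{1}{1+\eta}}
\le
\bigl(2\mu(Q)\bigr)^{\frac{1}{1+\eta}}C_\cD[w]_{A_1^\cD}\,\frac{w(Q)}{\mu(Q)},
\]
which is valid precisely because $\eta$ satisfies \eqref{eq:RHI-1}. Combining the two displays and using the exponent identity $\frac{1}{1+\eta}+\frac{\eta}{1+\eta}=1$, the factors of $\mu(Q)$ collapse, leaving
\[
w(E)\le 2^{\frac{1}{1+\eta}}C_\cD[w]_{A_1^\cD}\,w(Q)
\left(\frac{\mu(E)}{\mu(Q)}\right)^{\frac{\eta}{1+\eta}}.
\]
Dividing both sides by $w(Q)$ yields \eqref{eq:RHI-corollary}.

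I do not anticipate any genuine obstacle, since all the substantive work—verifying that $A_1^\cD\subset A_\infty^\cD$ and invoking the weak reverse Hölder inequality of Anderson--Hytönen--Tapiola—has already been carried out in Lemma~\ref{le:A1-A-infinity} and Lemma~\ref{le:RHI}. The only minor point to be careful about is bookkeeping of the constant $C_\cD$ and the factor $2^{1/(1+\eta)}$ coming from the $\frac{1}{2\mu(Q)}$ normalisation in \eqref{eq:RHI-2}, and to note that $w(Q)>0$ (so that division by $w(Q)$ is legitimate), which follows from the assumption that $\mu(B)>0$ for every ball combined with the $A_1^\cD$ condition on $w$ (otherwise the inequality is trivial because both sides equal zero once $w\equiv 0$ on $Q$).
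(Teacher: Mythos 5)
Your proposal is correct and follows essentially the same route as the paper: H\"older's inequality with exponents $1+\eta$ and $(1+\eta)/\eta$ applied to $w\chi_E$ on $Q$, followed by substituting the reverse H\"older inequality \eqref{eq:RHI-2} and collapsing the powers of $\mu(Q)$. Your extra remark about the legitimacy of dividing by $w(Q)$ is a harmless refinement that the paper passes over silently.
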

%%%----------------------------------------------------------------------------
\begin{proof}
By H\"older's inequality and reverse H\"older's inequality \eqref{eq:RHI-2},
%%%
\begin{align*}
w(E)
&=
\int_Q w(x)\chi_E (x)\,d\mu(x)
\\
&\le
\left(\int_Q w^{1+\eta}(x)\,d\mu(x)\right)^{\frac{1}{1+\eta}}
\big(\mu(E)\big)^{\frac{\eta}{1+\eta}}
\\
&\le 
2^{\frac{1}{1+\eta}}C_\cD[w]_{A_1^\cD}
\frac{\big(\mu(Q)\big)^{\frac{1}{1+\eta}}}{\mu(Q)}w(Q)
\big(\mu(E)\big)^{\frac{\eta}{1+\eta}}
\\
&=
2^{\frac{1}{1+\eta}}C_\cD[w]_{A_1^\cD}w(Q)
\left(\frac{\mu(E)}{\mu(Q)}\right)^{\frac{\eta}{1+\eta}},
\end{align*}
%%%
which immediately implies \eqref{eq:RHI-corollary}.
\end{proof}
%%%----------------------------------------------------------------------------
\section{Calder\'on-Zygmund decomposition.}
\label{sec:Calderon-Zygmund-decomposition}
We start this section with the following important observation.
%%%----------------------------------------------------------------------------
\begin{lemma}\label{le:measure-child-father}
Suppose $(X,d,\mu)$ is a space of homogeneous type with the constants
$\kappa\ge 1$ in inequality \eqref{eq:quasi-triangle} and $C_\mu\ge 1$ in 
inequality \eqref{eq:doubling-measure}. Let $(X,d,\mu)$ be equipped with 
an adjacent system of dyadic grids $\{\cD^t,t=1,\dots,K\}$ and let 
$\delta\in(0,1)$ be chosen as in Theorem~\ref{th:Hytonen-Kairema}. Then there 
is an $\eps=\eps(\kappa,C_\mu,\delta)\in(0,1)$ such that for every 
$t\in\{1,\dots,K\}$ and all $Q,P\in\cD^t$, if $Q$ is a child of $P$, then 
%%%
\begin{equation}\label{eq:measure-child-father-1}
\mu(Q)\ge\eps\mu(P).
\end{equation}
\end{lemma}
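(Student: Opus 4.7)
My plan is to control $\mu(P)$ from above by the measure of a ball containing $P$, and control $\mu(Q)$ from below by the measure of a ball contained in $Q$; the doubling property of $\mu$ will then relate the two. The cost appears entirely in the ratio of radii of those two balls.

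First, I would write $P=Q_\beta^{k,t}$ and $Q=Q_\alpha^{k+1,t}$ for the appropriate indices. By Theorem~\ref{th:Hytonen-Kairema}(a)(iii),
\[
B(z_\alpha^{k+1,t},c_1\delta^{k+1})\subset Q\subset P\subset B(z_\beta^{k,t},C_1\delta^k),
\]
with $c_1=(12\kappa^4)^{-1}$ and $C_1=4\kappa^2$. Since $z_\alpha^{k+1,t}\in Q\subset P$, we have $d(z_\alpha^{k+1,t},z_\beta^{k,t})<C_1\delta^k$. Applying the quasi-triangle inequality \eqref{eq:quasi-triangle} to an arbitrary $y\in B(z_\beta^{k,t},C_1\delta^k)$ then yields $d(y,z_\alpha^{k+1,t})<2\kappa C_1\delta^k$, so
\[
P\subset B(z_\alpha^{k+1,t},2\kappa C_1\delta^k).
\]

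Next, I would iterate the doubling inequality \eqref{eq:doubling-measure} to pass from the small ball inside $Q$ to the large ball containing $P$. The ratio of radii is $2\kappa C_1/(c_1\delta)=96\kappa^7/\delta$, so choosing the integer $m=m(\kappa,\delta):=\lceil\log_2(96\kappa^7/\delta)\rceil$ gives $B(z_\alpha^{k+1,t},2\kappa C_1\delta^k)\subset B(z_\alpha^{k+1,t},2^m c_1\delta^{k+1})$, and $m$ applications of \eqref{eq:doubling-measure} give
\[
\mu\bigl(B(z_\alpha^{k+1,t},2\kappa C_1\delta^k)\bigr)\le C_\mu^{m}\,\mu\bigl(B(z_\alpha^{k+1,t},c_1\delta^{k+1})\bigr)\le C_\mu^{m}\,\mu(Q).
\]
Combining the two displayed inclusions with monotonicity of $\mu$ yields $\mu(P)\le C_\mu^{m}\mu(Q)$, so the conclusion \eqref{eq:measure-child-father-1} follows with $\eps:=C_\mu^{-m}\in(0,1)$, which depends only on $\kappa$, $C_\mu$, and $\delta$, as required.

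I do not foresee a genuine obstacle here: every ingredient (comparison of cubes with balls, quasi-triangle, doubling) is already packaged in the setup. The only mildly delicate point is keeping track of constants so as to exhibit the explicit dependence $\eps=\eps(\kappa,C_\mu,\delta)$ claimed in the statement, which is a bookkeeping exercise rather than a conceptual one.
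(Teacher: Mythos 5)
Your proof is correct and follows essentially the same route as the paper: sandwich the parent inside a ball centered at the child's center, compare it with the ball inside the child given by Theorem~\ref{th:Hytonen-Kairema}(a)(iii), and iterate the doubling inequality to get $\eps=C_\mu^{-m}$. The only (harmless, in fact slightly more self-contained) difference is that you bound the distance between the two centers directly from $z^{k+1,t}\in Q\subset P\subset B(z^{k,t},C_1\delta^k)$, whereas the paper invokes \cite[Lemma~2.10]{HK12} to get the sharper bound $d(z^{k,t},z^{k+1,t})<2\kappa\delta^k$; this only changes the numerical constant in the exponent of $C_\mu$.
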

%%%----------------------------------------------------------------------------
This result is certainly known. For the construction of Christ, we refer to
\cite[Chap.~VI, Theorem 14]{C90b}, where it is stated without proof (see also 
\cite[Theorem~11]{C90a}, where it is implicit). In \cite[Theorem~2.1]{ACM15}
and \cite[Theorem~2.5]{CS18} it is stated without proof and attributed to 
Hyt\"onen and Kairema \cite{HK12}, although it is only implicit in the latter 
paper. For the convenience of the readers, we provide its proof.
%%%----------------------------------------------------------------------------
\begin{proof}
Let $Q=Q_\beta^{k+1,t}$ be a child of $P=Q_\alpha^{k,t}$ for some
$t\in\{1,\dots,K\}$, $k\in\Z$, and $\alpha\in\cA_k$, $\beta\in\cA_{k+1}$.
It follows from Theorem~\ref{th:Hytonen-Kairema}(a), part (iii), that
$P\subset B(z_\alpha^{k,t},4\kappa^2\delta^k)$ and 
$B(z_\beta^{k+1,t},(12\kappa^4)^{-1}\delta^{k+1})\subset Q$, whence
%%%
\begin{equation}\label{eq:measure-child-father-2}
\mu(P)\le\mu\big(B(z_\alpha^{k,t},4\kappa^2\delta^k)\big),
\quad
\mu\big(B(z_\beta^{k+1,t},(12\kappa^4)^{-1}\delta^{k+1})\big)\le\mu(Q).
\end{equation}
%%%
It follows from \cite[Lemma~2.10]{HK12} with $C_0=2\kappa$ 
(cf. \cite[Lemma~4.10]{HK12}) that if $Q_\beta^{k+1,t}$ is a child of
$Q_\alpha^{k,t}$, then
%%%
\begin{equation}\label{eq:measure-child-father-3}
d(z_\alpha^{k,t},z_\beta^{k+1,t})<2\kappa\delta^k.
\end{equation}
%%%
If $x\in B(z_\alpha^{k,t},4\kappa^2\delta^k)$, then 
%%%
\begin{equation}\label{eq:measure-child-father-4}
d(x,z_\alpha^{k,t})<4\kappa^2\delta^k.
\end{equation}
%%%
Combining \eqref{eq:quasi-triangle} with 
\eqref{eq:measure-child-father-3}--\eqref{eq:measure-child-father-4}, we get
%%%
\begin{align*}
d(x,z_\beta^{k+1,t})
&\le 
\kappa\big(d(x,z_\alpha^{k,t})+d(z_\beta^{k+1,t},z_\alpha^{k,t})\big)
\\
&< \kappa(4\kappa^2\delta^k+2\kappa\delta^k)=\kappa^2(4\kappa+2)\delta^k,
\end{align*}
%%%
whence $x\in B(z_\beta^{k+1,t},\kappa^2(4\kappa+2)\delta^k)$. Therefore
\[
B(z_\alpha^{k,t},4\kappa^2\delta^k)
\subset
B(z_\beta^{k+1,t},\kappa^2(4\kappa+2)\delta^k).
\]
This inclusion immediately implies that
%%%
\begin{equation}\label{eq:measure-child-father-5}
\mu\big(B(z_\alpha^{k,t},4\kappa^2\delta^k)\big)
\leq 
\mu\big(B(z_\beta^{k+1,t},\kappa^2(4\kappa+2)\delta^k)\big).
\end{equation}

Let $s$ be the smallest natural number satisfying 
\[
\log_2(12\kappa^6(4\kappa+2)\delta^{-1})\le s.
\]
Then $\kappa^2(4\kappa+2)\delta^k\le 2^s(12\kappa^4)^{-1}\delta^{k+1}$ and,
therefore,
%%%
\begin{equation}\label{eq:measure-child-father-6}
\mu\big(B(z_\beta^{k+1,t},\kappa^2(4\kappa+2)\delta^k)\big)
\leq 
\mu\big(B(z_\beta^{k+1,t},2^s(12\kappa^4)^{-1}\delta^{k+1})\big).
\end{equation}
%%%
Applying inequality \eqref{eq:doubling-measure} $s$ times, one gets
%%%
\begin{equation}\label{eq:measure-child-father-7}
\mu\big(B(z_\beta^{k+1,t},2^s(12\kappa^4)^{-1}\delta^{k+1})\big)
\leq
C_\mu^s\mu\big(B(z_\beta^{k+1,t},(12\kappa^4)^{-1}\delta^{k+1})\big).
\end{equation}
%%%
Combining inequalities \eqref{eq:measure-child-father-2} with
\eqref{eq:measure-child-father-5}--\eqref{eq:measure-child-father-7}, 
we arrive at
%%%
\begin{align*}
\mu(P)
& \le 
\mu\big(B(z_\beta^{k+1,t},\kappa^2(4\kappa+2)\delta^k\big) 
\\
&\le 
C_\mu^s\mu\big(B(z_\beta^{k+1,t}(12\kappa^4)^{-1}\delta^{k+1}\big) 
\le 
C_\mu^s\mu(Q),
\end{align*}
%%%
which implies inequality \eqref{eq:measure-child-father-1} with 
$\eps=C_\mu^{-s}$.
\end{proof}
%%%----------------------------------------------------------------------------
Once Lemma~\ref{le:measure-child-father} is available, one can prove the 
following version of the Calder\'on-Zygmund decomposition for spaces of
homogeneous type.
%%%----------------------------------------------------------------------------
\begin{theorem}
\label{th:CZ-decomposition}
Let $(X,d,\mu)$ be a space of homogeneous type and 
$\cD\in\bigcup_{t=1}^K\cD_t$ be a dyadic grid. Suppose that $\eps\in(0,1)$ is 
the same as in Lemma~\ref{le:measure-child-father} and $f\in L^1(X,d,\mu)$.
%%%
\begin{enumerate}
\item[{\rm(a)}]
If 
\[
\lambda>\left\{\begin{array}{lll}
0 &\mbox{if} &\mu(X)=\infty,
\\[3mm]
\displaystyle
\frac{1}{\mu(X)}\int_X|f(x)|\,d\mu(x) &\mbox{if}& \mu(X)<\infty,
\end{array}\right.
\]
and the set
\[
\Omega_\lambda:=\{x\in X\ :\ (M^\cD f)(x)>\lambda\}
\]
is nonempty, then there exists a collection $\{Q_j\}\subset\cD$ that is
pairwise disjoint, maximal with respect to inclusion, and such that
%%%
\begin{equation}\label{eq:CZ-decomposition-1}
\Omega_\lambda=\bigcup_j Q_j.
\end{equation}
%%%
Moreover, for every $j$,
%%%
\begin{equation}\label{eq:CZ-decomposition-2}
\lambda<\frac{1}{\mu(Q_j)}\int_{Q_j}|f(x)|\,d\mu(x)\le\frac{\lambda}{\eps}.
\end{equation}

\item[{\rm(b)}]
Let $a>{2}/{\eps}$ and, for $k\in\Z$ satisfying 
%%%
\begin{equation}\label{eq:CZ-decomposition-3}
a^k>\left\{\begin{array}{lll}
0 &\mbox{if} &\mu(X)=\infty,
\\[3mm]
\displaystyle
\frac{1}{\mu(X)}\int_X|f(x)|\,d\mu(x) &\mbox{if}& \mu(X)<\infty,
\end{array}\right.
\end{equation}
%%%
let
%%%
\begin{equation}\label{eq:CZ-decomposition-4}
\Omega_k:=\{x\in X\ :\ (M^\cD f)(x)>a^k\}.
\end{equation}
%%%
If $\Omega_k\ne\emptyset$, then there exists a collection 
$\{Q_j^k\}_{j\in J_k}$ (as in part {\rm (a)}) such that it is pairwise
disjoint, maximal with respect to inclusion, and
%%%
\begin{equation}\label{eq:CZ-decomposition-5}
\Omega_k=\bigcup_{j\in J_k} Q_j^k.
\end{equation}
%%%
The collection of cubes
\[
S=\{Q_j^k\ :\ \Omega_k\ne\emptyset,\ j\in J_k\}
\]
is sparse, and for all $j$ and $k$, the sets
\[
E(Q_j^k):=Q_j^k\setminus\Omega_{k+1}
\]
satisfy
%%%
\begin{equation}\label{eq:CZ-decomposition-6}
\mu(Q_j^k)\le 2\mu(E(Q_j^k)).
\end{equation}
\end{enumerate}
\end{theorem}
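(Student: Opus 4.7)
My plan is to carry out the classical Calder\'on-Zygmund stopping-time argument within the dyadic grid $\cD$, with the uniform lower bound $\eps$ from Lemma~\ref{le:measure-child-father} on the ratio of a child's measure to its parent's playing the role that the factor $2^{-n}$ plays in the Euclidean setting.

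For part (a), I would let $\mathcal{F}:=\{Q\in\cD : \tfrac{1}{\mu(Q)}\int_Q|f|\,d\mu>\lambda\}$; every $Q\in\mathcal{F}$ satisfies $\mu(Q)<\lambda^{-1}\int_X|f|\,d\mu$. The hypothesis on $\lambda$ is designed precisely to prevent an infinite ascending chain inside $\mathcal{F}$: in the $\mu(X)=\infty$ case the ancestors of any fixed cube contain balls of radius $\sim\delta^k$ with $k\to-\infty$ and so have measure tending to infinity, while in the $\mu(X)<\infty$ case the inequality $\lambda\mu(X)>\int_X|f|\,d\mu$ expels any cube whose measure is close to $\mu(X)$. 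Consequently, for every $x\in\Omega_\lambda$ the subfamily $\mathcal{F}^x\subset\mathcal{F}$ of cubes containing $x$ is totally ordered by inclusion (Theorem~\ref{th:Hytonen-Kairema}(a)(ii)) and possesses a largest element $Q^x$. Two such maxima are either equal or disjoint, so relabelling the distinct ones as $\{Q_j\}$ produces a pairwise disjoint, maximal-by-inclusion family whose union equals $\Omega_\lambda$ ($\bigcup_j Q_j\subset\Omega_\lambda$ from $Q_j\in\mathcal{F}$, and the reverse from $x\in Q^x\subset Q_j$). The lower bound in \eqref{eq:CZ-decomposition-2} is membership in $\mathcal{F}$; for the upper bound I would exploit the maximality of $Q_j$, which forces its dyadic parent $\widehat{Q_j}\notin\mathcal{F}$, and combine with $\mu(\widehat{Q_j})\le\mu(Q_j)/\eps$ from Lemma~\ref{le:measure-child-father} to get
\[
\frac{1}{\mu(Q_j)}\int_{Q_j}|f|\,d\mu
\le
\frac{1}{\eps}\cdot\frac{1}{\mu(\widehat{Q_j})}\int_{\widehat{Q_j}}|f|\,d\mu
\le
\frac{\lambda}{\eps}.
\]

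For part (b), I would apply part~(a) with $\lambda=a^k$ for each admissible $k$, obtaining the families $\{Q_j^k\}_{j\in J_k}$ satisfying \eqref{eq:CZ-decomposition-5}. Disjointness of the sets $E(Q_j^k)$ across different $k$ is automatic: for $k'>k$, each $Q_{j'}^{k'}\subset\Omega_{k'}\subset\Omega_{k+1}$, so $E(Q_{j'}^{k'})\cap E(Q_j^k)=\emptyset$; within the same $k$ disjointness is inherited from $\{Q_j^k\}_{j\in J_k}$. For the measure estimate, dyadic nesting together with $\Omega_{k+1}\subset\Omega_k=\bigcup_{j'\in J_k}Q_{j'}^k$ forces every $Q_i^{k+1}$ into exactly one $Q_{j'}^k$, so $Q_j^k\cap\Omega_{k+1}$ is the disjoint union of those $Q_i^{k+1}$ it contains. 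Using the defining inequality $a^{k+1}\mu(Q_i^{k+1})<\int_{Q_i^{k+1}}|f|\,d\mu$ and then the upper bound of \eqref{eq:CZ-decomposition-2} applied to $Q_j^k$ at the level $a^k$, I would estimate
\[
\mu(Q_j^k\cap\Omega_{k+1})
=\sum_{Q_i^{k+1}\subset Q_j^k}\mu(Q_i^{k+1})
\le\frac{1}{a^{k+1}}\int_{Q_j^k}|f|\,d\mu
\le\frac{1}{a\eps}\mu(Q_j^k).
\]
The hypothesis $a>2/\eps$ gives $1/(a\eps)<1/2$, which immediately yields \eqref{eq:CZ-decomposition-6}.

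The only real subtlety is the quantitative control afforded by Lemma~\ref{le:measure-child-father}: in a general space of homogeneous type a dyadic child can a priori be arbitrarily small relative to its parent, and it is precisely the uniform bound $\mu(Q)\ge\eps\mu(P)$ that legitimizes both the upper estimate in \eqref{eq:CZ-decomposition-2} and the threshold $a>2/\eps$; once this lemma and the standard stopping-time construction are in place, the argument is a faithful transcription of the classical Euclidean Calder\'on-Zygmund decomposition.
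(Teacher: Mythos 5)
Your proposal is correct and follows essentially the same route as the paper: a stopping-time selection of maximal dyadic cubes in the family $\{Q\in\cD:\mu(Q)^{-1}\int_Q|f|\,d\mu>\lambda\}$, the upper bound in \eqref{eq:CZ-decomposition-2} via the dyadic parent combined with Lemma~\ref{le:measure-child-father}, and the estimate $\mu(Q_j^k\cap\Omega_{k+1})\le(a\eps)^{-1}\mu(Q_j^k)$ together with $a>2/\eps$ to obtain sparseness. The only cosmetic difference is your justification that maximal cubes exist (growth of ancestors' measures versus the paper's remark that averages vanish as $\mu(Q)\to\infty$), which is the same idea stated at the same level of detail.
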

%%%----------------------------------------------------------------------------
\begin{proof}
The proof is analogous to the proof of \cite[Proposition~A.1]{CMP11}.
For the convenience of the reader, we provide the proof in the case of 
$\mu(X)=\infty$. For $\mu(X)<\infty$, the proof is similar.

(a) Let $\Lambda_\lambda$ be the family of dyadic cubes $Q\in\cD$ such that
%%%
\begin{equation}\label{eq:CZ-decomposition-7}
\lambda<\frac{1}{\mu(Q)}\int_Q|f(x)|\,d\mu(x).
\end{equation}
%%%
Notice that $\Lambda_\lambda$ is nonempty because $\Omega_\lambda\ne\emptyset$.
For each $Q\in\Lambda_\lambda$ there exists a maximal cube 
$Q'\in\Lambda_\lambda$ with $Q\subset Q'$, since
\[
0\le
\frac{1}{\mu(Q)}\int_Q |f(x)|\,d\mu(x)
\le
\frac{1}{\mu(Q)}\int_X |f(x)|\,d\mu(x)\to 0
\quad\mbox{as}\quad \mu(Q)\to\infty.
\]
Let $\{Q_j\}\subset\Lambda_\lambda$ denote the family of such maximal cubes. 
By the maximality, the cubes in $\{Q_j\}$ are pairwise disjoint. If
$\widetilde{Q}_j$ is the dyadic parent of $Q_j$, then 
$Q_j\subset\widetilde{Q}_j$ and $\widetilde{Q}_j$ does not belong to $\{Q_j\}$
in view of the maximality of the cubes in $\{Q_j\}$. Hence, taking into
account Lemma~\ref{le:measure-child-father}, we see that
\[
\lambda
<
\frac{1}{\mu(Q_j)}\int_{Q_j}|f(x)|\,d\mu(x)
\le
\frac{1}{\eps\mu(\widetilde{Q}_j)}\int_{\widetilde{Q}_j}|f(x)|\,d\mu(x)
\le 
\frac{\lambda}{\eps},
\]
which completes the proof of \eqref{eq:CZ-decomposition-2}.

If $x\in\Omega_\lambda$, then it follows from the definition of $M^\cD f$
that there exists a cube $Q\in\cD$ such that $x\in Q$ and 
\eqref{eq:CZ-decomposition-7} is fulfilled. Hence $Q\subset Q_j$ for some $j$.
Therefore, $\Omega_\lambda\subset\bigcup_j Q_j$.

Conversely, since 
\[
\lambda<\frac{1}{\mu(Q_j)}\int_{Q_j}|f(x)|\,d\mu(x),
\]
if $x\in Q_j$, then $(M^\cD f)(x)>\lambda$. This means that 
$x\in\Omega_\lambda$. Therefore, $\bigcup_j Q_j\subset\Omega_\lambda$.
Thus \eqref{eq:CZ-decomposition-1} holds. Part (a) is proved.

(b) Equality \eqref{eq:CZ-decomposition-5} follows from part (a). Since
$\Omega_{k+1}\subset\Omega_k$ and for each fixed $k$, the cubes
$Q_j^k$ are pairwise disjoint, it is clear that the sets $E(Q_j^k)$ are 
pairwise disjoint for all $j$ and $k$. If $Q_j^k\cap Q_i^{k+1}\ne\emptyset$,
then by the maximality of the cubes in $\{Q_j^k\}_{j\in J_k}$ and the 
hypothesis $a>2/\eps$, we have $Q_i^{k+1}\subsetneqq Q_j^k$.
In view of part (a),
%%%
\begin{align*}
\mu(Q_j^k\cap\Omega_{k+1})
&=
\sum_{\{i:\ Q_i^{k+1}\subsetneqq Q_j^k\}} 
\mu(Q_i^{k+1})
\\
&\le 
\sum_{\{i:\ Q_i^{k+1}\subsetneqq Q_j^k\}}
\frac{1}{a^{k+1}}\int_{Q_i^{k+1}}|f(x)|\,d\mu(x)
\\
&\le 
\frac{1}{a^{k+1}}\int_{Q_j^k}|f(x)|\,d\mu(x)
\\
&\le 
\frac{1}{a^{k+1}}\cdot\frac{a^k\mu(Q_j^k)}{\eps}=\frac{\mu(Q_j^k)}{a\eps}.
\end{align*}
%%%
Then
%%%
\begin{align*}
\mu(E(Q_j^k))
&=
\mu(Q_j^k\setminus\Omega_{k+1})=\mu(Q_j^k)-\mu(Q_j^k\cap\Omega_{k+1})
\\
&\ge 
\left(1-\frac{1}{a\eps}\right)\mu(Q_j^k)
>\left(1-\frac{1}{2}\right)\mu(Q_j^k),
\end{align*}
%%%
whence $\mu(Q_j^k)\le 2\mu(E(Q_j^k))$ for all $j$ and $k$, which completes the 
proof of \eqref{eq:CZ-decomposition-6}.
\end{proof}
%%%----------------------------------------------------------------------------
\begin{corollary}\label{co:CZ-corollary}
Let $(X,d,\mu)$ be a space of homogeneous type and $\cD\in\bigcup_{t=1}^K\cD^t$ 
be a dyadic grid on $X$. Suppose $\eps\in(0,1)$ is the same as in 
Lemma~\ref{le:measure-child-father} and $a>2/\eps$. For a non-negative function 
$f\in L^1(X,d,\mu)$ and $k\in\Z$ satisfying \eqref{eq:CZ-decomposition-3}, let
the sets $\Omega_k$ be given by \eqref{eq:CZ-decomposition-4}.
For all $\ell\in\Z_+$ and all $j$ such that 
$Q_j^k\subset\Omega_k$,
%%%
\begin{equation}\label{eq:CZ-corollary-1}
\mu(Q_j^k\cap\Omega_{k+\ell})\le \frac{\mu(Q_j^k)}{a^{\ell} \eps}.
\end{equation}
\end{corollary}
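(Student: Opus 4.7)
The plan is to apply, at the single level $\lambda=a^{k+\ell}$, the same kind of argument that proves Theorem~\ref{th:CZ-decomposition}(b). First I would invoke part~(a) of that theorem at level $a^{k+\ell}$ to decompose $\Omega_{k+\ell}$ into the pairwise disjoint, maximal cubes $\{Q_i^{k+\ell}\}_{i\in J_{k+\ell}}$ whose averages of $|f|$ lie between $a^{k+\ell}$ and $a^{k+\ell}/\eps$ by \eqref{eq:CZ-decomposition-2}.

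The key structural step is to show that every $Q_i^{k+\ell}$ meeting $Q_j^k$ must satisfy $Q_i^{k+\ell}\subset Q_j^k$. By the dyadic property of Theorem~\ref{th:Hytonen-Kairema}(a)(ii), the two cubes are nested one way or the other. If the reverse inclusion $Q_j^k\subsetneq Q_i^{k+\ell}$ were to hold, then from $a^k<a^{k+\ell}$ together with the lower bound on the average of $|f|$ over $Q_i^{k+\ell}$, the cube $Q_i^{k+\ell}$ would satisfy the averaging condition \eqref{eq:CZ-decomposition-7} defining the family $\Lambda_{a^k}$ used in the proof of Theorem~\ref{th:CZ-decomposition}(a), contradicting the maximality of $Q_j^k$ in that family.

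With the nesting in hand, I would write $Q_j^k\cap\Omega_{k+\ell}$ as the disjoint union of the cubes $Q_i^{k+\ell}$ contained in $Q_j^k$, bound each $\mu(Q_i^{k+\ell})$ from above by $a^{-(k+\ell)}\int_{Q_i^{k+\ell}}|f|\,d\mu$ using the lower estimate in \eqref{eq:CZ-decomposition-2} at level $k+\ell$, sum over $i$ and use disjointness to pull the integrals up to $\int_{Q_j^k}|f|\,d\mu$, and finally apply the upper estimate $\int_{Q_j^k}|f|\,d\mu\le a^k\mu(Q_j^k)/\eps$ coming from \eqref{eq:CZ-decomposition-2} at level $k$. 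Everything cancels to exactly $\mu(Q_j^k)/(a^\ell\eps)$.

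No serious obstacle is anticipated: all of the ingredients already appear in the proof of Theorem~\ref{th:CZ-decomposition}(b), which handles precisely the case $\ell=1$. The only mild subtlety is the nesting argument, where one must be careful to rule out the possibility $Q_j^k\subsetneq Q_i^{k+\ell}$ by exploiting the strict gap $a^k<a^{k+\ell}$ between the two thresholds; this is where the hypothesis $a>2/\eps$ (which in particular gives $a>1$) is used.
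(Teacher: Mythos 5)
Your proposal is correct and follows essentially the same route as the paper: decompose $\Omega_{k+\ell}$ into the maximal cubes $Q_i^{k+\ell}$, note that those meeting $Q_j^k$ are contained in it, bound each $\mu(Q_i^{k+\ell})$ by $a^{-(k+\ell)}\int_{Q_i^{k+\ell}}f\,d\mu$, sum by disjointness, and finish with the upper estimate $\int_{Q_j^k}f\,d\mu\le a^k\mu(Q_j^k)/\eps$ from \eqref{eq:CZ-decomposition-2}. The only difference is that you spell out the nesting argument (via maximality of $Q_j^k$ in $\Lambda_{a^k}$ and $a>1$), which the paper leaves implicit.
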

%%%----------------------------------------------------------------------------
\begin{proof}
The proof is analogous to the proof in the Euclidean setting of $\R^n$ given in
\cite[Lemma~2.4]{L17}. Since the cubes of $\Omega_{k+\ell}$ are pairwise 
disjoint and maximal, it follows from Theorem~\ref{th:CZ-decomposition}(a) that
%%%
\begin{align*}
\mu(Q_j^k\cap\Omega_{k+\ell}) 
&
=
\sum_{\{i:Q_i^{k+\ell}\subsetneqq Q_j^k\}}\mu(Q_i^{k+\ell})
\\
&< 
\frac{1}{a^{k+\ell}}\sum_{\{i:Q_i^{k+\ell}\subsetneqq Q_j^k\}}
\int_{Q_i^{k+\ell}}f(x)\,d\mu(x)
\\
&\le 
\frac{1}{a^{k+\ell}}\int_{Q_j^k}f(x)\,d\mu(x)
\\
&\le 
\frac{\mu(Q_j^k)}{a^{k+\ell}}\cdot\frac{a^k}{\eps}
=
\frac{\mu(Q_j^k)}{a^{\ell}\eps},
\end{align*}
%%%
which completes the proof of \eqref{eq:CZ-corollary-1}.
\end{proof}
%%%----------------------------------------------------------------------------
The following lemma in the Euclidean setting of $\R^n$ was proved in 
\cite[Lemma~2.6]{L17}.
%%%----------------------------------------------------------------------------
\begin{lemma}\label{le:estimate-for-dyadic-maximal}
Let $(X,d,\mu)$ be a space of homogeneous type and $\cD\in\bigcup_{t=1}^K\cD^t$ 
be a dyadic grid on $X$. Suppose $\eps\in(0,1)$ is the same as in 
Lemma~\ref{le:measure-child-father} and $a>2/\eps$. For every non-negative 
function $f\in L^1(X,d,\mu)$ there exists a sparse family $S\subset\cD$ 
(depending on $f$) such that for all $x\in X$,
\[
(M^\cD f)(x)
\le 
a\sum_{Q\in S} \left(\frac{1}{\mu(Q)}\int_Q f(y)\,d\mu(y)\right) 
\chi_{E(Q)}(x).
\]
\end{lemma}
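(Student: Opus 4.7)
The plan is to apply the Calder\'on-Zygmund decomposition of Theorem~\ref{th:CZ-decomposition}(b) at the geometric thresholds $\lambda=a^k$ and read off the sparse family from it. Concretely, I would set, for each $k\in\Z$ satisfying \eqref{eq:CZ-decomposition-3},
\[
\Omega_k:=\{x\in X:(M^\cD f)(x)>a^k\},
\]
and for each such $k$ with $\Omega_k\ne\emptyset$ pick the maximal disjoint family $\{Q_j^k\}_{j\in J_k}\subset\cD$ supplied by Theorem~\ref{th:CZ-decomposition}(b), together with $E(Q_j^k):=Q_j^k\setminus\Omega_{k+1}$. The candidate sparse family would be $S:=\{Q_j^k:\Omega_k\ne\emptyset,\,j\in J_k\}$, which is sparse by \eqref{eq:CZ-decomposition-6}.

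Next, I would check two structural facts. First, the sets $\{E(Q_j^k)\}$ are pairwise disjoint across all $j,k$; this is already contained in the proof of Theorem~\ref{th:CZ-decomposition}(b), and a short argument based on the nesting $\Omega_{k+1}\subset\Omega_k$ together with the maximality of the $Q_j^k$ shows that for $k'>k$, every $Q_i^{k'}$ intersecting $Q_j^k$ is actually contained in $Q_j^k\cap\Omega_{k+1}$, hence disjoint from $E(Q_j^k)$. Second, since $f\in L^1(X,d,\mu)$ and $M^\cD$ is of weak type $(1,1)$, one has $\mu(\bigcap_k\Omega_k)=0$; consequently, for a.e.\ $x$ with $(M^\cD f)(x)>0$ there is a largest integer $k(x)$ with $x\in\Omega_{k(x)}$. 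Then $x\in\Omega_{k(x)}\setminus\Omega_{k(x)+1}$ and $x$ belongs to exactly one cube $Q_{j(x)}^{k(x)}\in S$, hence to exactly one set $E(Q_{j(x)}^{k(x)})$.

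For such an $x$, all but one term in the sum on the right-hand side vanish, and combining the lower bound $a^{k(x)}<\frac{1}{\mu(Q_{j(x)}^{k(x)})}\int_{Q_{j(x)}^{k(x)}}f\,d\mu$ from \eqref{eq:CZ-decomposition-2} with the upper bound $(M^\cD f)(x)\le a^{k(x)+1}$ (coming from $x\notin\Omega_{k(x)+1}$) yields
\[
a\sum_{Q\in S}\left(\frac{1}{\mu(Q)}\int_Q f(y)\,d\mu(y)\right)\chi_{E(Q)}(x)
= a\cdot\frac{1}{\mu(Q_{j(x)}^{k(x)})}\int_{Q_{j(x)}^{k(x)}}f\,d\mu
> a^{k(x)+1}
\ge (M^\cD f)(x).
\]
The case $(M^\cD f)(x)=0$ is trivial, and the argument for $\mu(X)=\infty$ is complete because $x\notin\bigcup_k\Omega_k$ then forces $(M^\cD f)(x)\le a^k$ for every $k\in\Z$, i.e.\ $(M^\cD f)(x)=0$.

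The only real obstacle is the edge case $\mu(X)<\infty$, where $k$ is bounded below by the smallest integer $k_0$ with $a^{k_0}>\frac{1}{\mu(X)}\int|f|\,d\mu$, so $x$ with $(M^\cD f)(x)\le a^{k_0}$ is not captured by any $\Omega_k$ in our family. For such $x$ one must argue separately, either by showing that the trivial bound $(M^\cD f)(x)\le\frac{1}{\mu(X)}\int|f|\,d\mu$ is already subsumed by the contribution of a single coarse cube adjoined to $S$, or by running the construction starting one index lower; either way this is a routine but slightly fiddly technicality and is the only place where the argument requires genuine care beyond the standard sparse domination scheme.
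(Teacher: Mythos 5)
Your proposal is correct and follows essentially the same route as the paper: the Calder\'on--Zygmund decomposition of Theorem~\ref{th:CZ-decomposition}(b) at the levels $a^k$ yields the sparse family $S$ and the sets $E(Q_j^k)$, and the pointwise bound comes from combining $(M^\cD f)(x)\le a^{k+1}$ for $x\in Q_j^k\setminus\Omega_{k+1}$ with $a^k<\frac{1}{\mu(Q_j^k)}\int_{Q_j^k}f\,d\mu$. The finite-measure edge case (and the null sets where $M^\cD f$ is $0$ or infinite) that you flag is not treated any more carefully in the paper, which simply asserts $X=\bigcup_{k\in\mathbb{K}}\Omega_k\setminus\Omega_{k+1}$ and then argues exactly as you do.
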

%%%----------------------------------------------------------------------------
\begin{proof}
The proof is, actually, contained in the proof of 
\cite[Theorem~3.1, p.~30]{ACM15}. We reproduce it here for completeness.

Let $\mathbb{K}$ denote the set of all $k\in\Z$ satisfying 
\eqref{eq:CZ-decomposition-3}. Then
%%%
\begin{equation}\label{eq:estimate-for-dyadic-maximal-1}
X=\bigcup_{k\in\mathbb{K}}\Omega_k\setminus\Omega_{k+1}.
\end{equation}
%%%
Let $S$ be the sparse family given by Theorem~\ref{th:CZ-decomposition}(b).
For $k\in\mathbb{K}$ and a given $x\in\Omega_k\setminus\Omega_{k+1}$, there
exists a cube $Q_j^k\in S$ such that $x\in Q_j^k\setminus\Omega_{k+1}$ and 
%%%
\begin{equation}\label{eq:estimate-for-dyadic-maximal-2}
(M^\cD f)(x)\le a^{k+1}<\frac{a}{\mu(Q_j^k)}\int_{Q_j^k} f(y)\,d\mu(y).
\end{equation}
%%%
Taking into account that by Theorem~\ref{th:CZ-decomposition}(b),
%%%
\begin{equation}\label{eq:estimate-for-dyadic-maximal-3}
\Omega_k\setminus\Omega_{k+1}
=
\left(\bigcup_{j\in J_k}Q_j^k\right)\setminus\Omega_{k+1}
=
\bigcup_{j\in J_k}E(Q_j^k),
\end{equation}
%%%
we obtain from \eqref{eq:estimate-for-dyadic-maximal-1}--\eqref{eq:estimate-for-dyadic-maximal-3}
for all $x\in X$,
%%%
\begin{align*}
(M^\cD f)(x)
&=
\sum_{k\in\mathbb{K}} (M^\cD f)(x)\chi_{\Omega_k\setminus\Omega_{k+1}} (x)
\\
&\le 
\sum_{k\in\mathbb{K}}\sum_{j\in J_k}
\left(\frac{a}{\mu(Q_j^k)}\int_{Q_j^k} f(y)\,d\mu(y)\right)\chi_{E(Q_j^k)}(x)
\\
&=
a\sum_{Q\in S}\left(\frac{1}{\mu(Q)}\int_Q f(y)\,d\mu(y)\right) 
\chi_{E(Q)}(x),
\end{align*}
which completes the proof.
\end{proof}
%%%----------------------------------------------------------------------------
\section{Proof of part (a) of Theorem~\ref{th:main}.}
\label{sec:proof-necessity}
The scheme of the proof is borrowed from the proof of the necessity portion of
\cite[Theorem~3.1]{L17}.

For a bounded sublinear operator on a Banach function space $\cE'(X,d,\mu)$,
let $\|T\|_{\cB(\cE')}$ denote its norm.

Fix $\cD\in\bigcup_{t=1}^K\cD^t$. It follows from the boundedness of the 
Hardy-Littlewood maximal operator $M$ on $\cE'(X,d,\mu)$ in view of
Theorem~\ref{th:HK-pointwise} and the lattice property (axiom (A2) in the 
definition of a Banach function space) that the dyadic maximal operator
$M^\cD$ is bounded on the space $\cE'(X,d,\mu)$ and
%%%
\begin{equation}\label{eq:proof-a-1}
\|M^\cD\|_{\cB(\cE')}\le C_{HK}(X)\|M\|_{\cB(\cE')}.
\end{equation}
Let $g\in L_+^0(X,d,\mu)$ with $\|g\|_{\cE'}\le 1$. Using an idea of
Rubio de Francia \cite{RF84} (see also \cite[Section~2.1]{CMP11}), put
\[
(Rg)(x):=\sum_{k=0}^\infty
\frac{\big((M^\cD)^kg\big)(x)}{\big(2\|M^\cD\|_{\cB(\cE')}\big)^k},
\quad x\in X,
\]
where $(M^\cD)^k$ denotes the $k$-th iteration of $M^\cD$ and $(M^\cD)^0g:=g$.
Then
%%%
\begin{equation}\label{eq:proof-a-2}
\|Rg\|_{\cE'}\le 2
\end{equation}
%%%
and
%%%
\begin{equation}\label{eq:proof-a-3}
g(x)\le (Rg)(x)
\quad\mbox{for a.e.}\quad x\in X.
\end{equation}
%%%
Since $M^\cD$ is sublinear, we have
%%%
\begin{align*}
(M^\cD Rg)(x)
&\le 
\sum_{k=0}^\infty
\frac{\big((M^\cD)^{k+1}g\big)(x)}{\big(2\|M^\cD\|_{\cB(\cE')}\big)^k}
\\
&=
2\|M^\cD\|_{\cB(\cE')}
\sum_{k=0}^\infty
\frac{\big((M^\cD)^{k+1}g\big)(x)}{\big(2\|M^\cD\|_{\cB(\cE')}\big)^{k+1}}
\\
&\le 
2\|M^\cD\|_{\cB(\cE')}
\sum_{k=0}^\infty
\frac{\big((M^\cD)^{k}g\big)(x)}{\big(2\|M^\cD\|_{\cB(\cE')}\big)^{k}}
\\
&=
2\|M^\cD\|_{\cB(\cE')}(Rg)(x),
\end{align*}
%%%
whence $Rg\in A_1^\cD$ with
%%%
\begin{equation}\label{eq:proof-a-4}
[Rg]_{A_1^\cD}\le 2\|M^\cD\|_{\cB(\cE')}.
\end{equation}
%%%
Let the constants $C_{\cD^t}\ge 1$ be defined for each $t\in\{1,\dots,K\}$
by \eqref{eq:constant-CD}.
Take $\eta$ and $\gamma$ such that
\[
0<\eta\le 
\left(
\left(\max_{1\le t\le K}C_{\cD^t}^2\right) 
K C_{HK}(X)\|M\|_{\cB(\cE')}\right)^{-1},
\quad
\gamma=\frac{\eta}{1+\eta}.
\]
Inequalities \eqref{eq:proof-a-4} and \eqref{eq:proof-a-1} imply that
%%%
\begin{equation}\label{eq:proof-a-5}
[Rg]_{A_1^\cD}\le 2C_{HK}(X)\|M\|_{\cB(\cE')},
\end{equation}
%%%
whence $\eta$ satisfies \eqref{eq:RHI-1}. Since $Rg\in A_1^\cD$, it follows 
from  Theorem~\ref{th:RHI-corollary} and inequality \eqref{eq:proof-a-5} that, 
for every cube $Q\in\cD$ and every measurable subset $G_Q\subset Q$, one has
%%%
\begin{align}
\int_{G_Q}(Rg)(x)\,d\mu(x)
&\le 
2^{1-\gamma} C_\cD[Rg]_{A_1^\cD}
\left(\frac{\mu(G_Q)}{\mu(Q)}\right)^\gamma
\int_Q(Rg)(x)\,d\mu(x)
\nonumber\\
&\le
\frac{C}{2}\left(\frac{\mu(G_Q)}{\mu(Q)}\right)^\gamma
\int_Q(Rg)(x)\,d\mu(x),
\label{eq:proof-a-6}
\end{align}
%%%
where
\[
C:=2^{3-\gamma}\left(\max_{1\le t\le K}C_{\cD^t}\right) 
C_{HK}(X)\|M\|_{\cB(\cE')}.
\]
Taking into account inequalities \eqref{eq:proof-a-3},
\eqref{eq:proof-a-6}, H\"older's inequality for Banach
function spaces (see \cite[Chap.~1, Theorem~2.4]{BS88}), and inequality
\eqref{eq:proof-a-2}, we deduce that, for every finite
sparse family $S\subset\cD$, every collection of non-negative numbers
$\{\alpha_Q\}_{Q\in S}$, every collection of pairwise disjoint
measurable subsets $G_Q\subset Q$, and every $g\in L_+^0(X,d,\mu)$ satisfying 
$\|g\|_{\cE'}\le 1$, one has
%%%
\begin{align*}
\int_X 
\Bigg( &\sum_{Q\in S}\alpha_Q\chi_{G_Q}(x)\Bigg)g(x)\,d\mu(x)
\\
&\le 
\sum_{Q\in S}\alpha_Q \int_{G_Q}g(x)\,d\mu(x)
\\
&\le 
\sum_{Q\in S}\alpha_Q \int_{G_Q} (Rg)(x)\,d\mu(x)
\\
&\le 
\frac{C}{2}\sum_{Q\in S}\alpha_Q
\left(\frac{\mu(G_Q)}{\mu(Q)}\right)^\gamma \int_Q(Rg)(x)\,d\mu(x)
\\
&\le 
\frac{C}{2}\left(\max_{Q\in S}\frac{\mu(G_Q)}{\mu(Q)}\right)^\gamma
\int_X\left(\sum_{Q\in S}\alpha_Q\chi_Q(x)\right)(Rg)(x)\,d\mu(x)
\\
&\le 
\frac{C}{2}\left(\max_{Q\in S}\frac{\mu(G_Q)}{\mu(Q)}\right)^\gamma
\Bigg\|\sum_{Q\in S}\alpha_Q\chi_Q\Bigg\|_{\cE}\|Rg\|_{\cE'}
\\
&\le
C\left(\max_{Q\in S}\frac{\mu(G_Q)}{\mu(Q)}\right)^\gamma
\Bigg\|\sum_{Q\in S}\alpha_Q\chi_Q\Bigg\|_{\cE}.
\end{align*}
%%%
Then, in view of the Lorentz-Luxemburg theorem (see 
\cite[Chap.~1, Theorem~2.7]{BS88}),
%%%
\begin{align*}
&
\Bigg\|
\sum_{Q\in S}\alpha_Q\chi_{G_Q}\Bigg\|_{\cE}
=
\Bigg\| \sum_{Q\in S}\alpha_Q\chi_{G_Q}\Bigg\|_{\cE''}
\\
&\quad=
\sup\left\{\int_X\Bigg(\sum_{Q\in S}\alpha_Q\chi_{G_Q}(x)\Bigg)g(x)\,d\mu(x):
g\in L_+^0(X,d,\mu), \ \|g\|_{\cE'}\le 1
\right\}
\\
&\quad\le 
C\left(\max_{Q\in S}\frac{\mu(G_Q)}{\mu(Q)}\right)^\gamma
\Bigg\|\sum_{Q\in S}\alpha_Q\chi_Q\Bigg\|_{\cE},
\end{align*}
%%%
that is, the space $\cE(X,d,\mu)$ satisfies the condition $\cA_\infty$, which 
completes the proof of part (a) of Theorem~\ref{th:main}.
\qed
%%%----------------------------------------------------------------------------
\section{Proof of part (b) of Theorem~\ref{th:main}.}
\label{sec:proof-sufficiency}
%%%----------------------------------------------------------------------------
We follow the proof of the sufficiency portion of the proof of
\cite[Theorem~3.1]{L17}. Let $\eps\in(0,1)$ be the same as in 
Lemma~\ref{le:measure-child-father}. Take $a>2/\eps$. Assume that 
$f\in L^1(X,d,\mu)\cap\cE'(X,d,\mu)$ is a nonnegative function and fix any dyadic 
grid $\cD\in\bigcup_{t=1}^k\cD^t$. By Lemma~\ref{le:estimate-for-dyadic-maximal}, 
there exists a sparse family $S\subset\cD$ (not necessarily finite) such that
for all $x\in X$,
%%%
\begin{equation}\label{eq:proof-b-1}
(M^\cD f)(x)
\le 
a\sum_{Q\in S}\left(\frac{1}{\mu(Q)}\int_Q f(y)\,d\mu(y)\right)\chi_{E(Q)}(x).
\end{equation}
%%%
For every subfamily $S'\subset S$, put
\[
(A_{S'}f)(x)=
\sum_{Q\in S'}\left(\frac{1}{\mu(Q)}\int_Q f(y)\,d\mu(y)\right)\chi_{E(Q)}(x).
\]
Let $\{S_t\}_{t\in \N}$ be a sequence of subfamilies of $S$ such that
each subfamily $S_t$ is finite, $S_t\subset S_n$ if $t<n$, and 
$A_{S_t}f\uparrow A_S f$ a.e. on $X$ as $t\to\infty$. By the Fatou property
(axiom (A3) in the definition of a Banach function space),
%%%
\begin{equation}\label{eq:proof-b-2}
\lim_{t\to\infty}\|A_{S_t}f\|_{\cE'}=\|A_Sf\|_{\cE'}.
\end{equation}
%%%
By the Fubini theorem, for every $g\in\cE(X,d,\mu)$ and every $t\in\N$,  
one has
%%%
\begin{align}
\int_X & (A_{S_t}f) (x)g(x)\,d\mu(x)
\nonumber\\
&=
\int_X\int_X\sum_{Q\in S_t}\frac{1}{\mu(Q)}f(y)\chi_Q(y)\chi_{E(Q)}(x)g(x)
\,d\mu(y)\,d\mu(x)
\nonumber\\
&=
\int_X\int_X\sum_{Q\in S_t}\frac{1}{\mu(Q)}f(x)\chi_Q(x)\chi_{E(Q)}(y)g(y)
\,d\mu(y)\,d\mu(x)
\nonumber\\
&=
\int_X\sum_{Q\in S_t}\left(\frac{1}{\mu(Q)}\int_{E(Q)}g(y)\,d\mu(y)\right)
\chi_Q(x)f(x)\,d\mu(x)
\nonumber\\
&=
\int_X f(x)(A_{S_t}^*g)(x)\,d\mu(x),
\label{eq:proof-b-3}
\end{align}
%%%
where
\[
(A_{S_t}^*g)(x)
:=
\sum_{Q\in S_t}\left(\frac{1}{\mu(Q)}\int_{E(Q)}g(y)\,d\mu(y)\right)\chi_Q(x),
\quad
x\in X.
\]
It follows from \eqref{eq:proof-b-3} and H\"older's inequality for
Banach function spaces (see \cite[Chap.~1, Theorem~2.4]{BS88}) that
%%%
\begin{equation}\label{eq:proof-b-4}
\left|\int_X (A_{S_t}f)(x)g(x)\,d\mu(x)\right|
\leq
\|A_{S_t}^*g\|_\cE\|f\|_{\cE'}.
\end{equation}
%%5
Let $C,\gamma>0$ be as in Definition~\ref{def:A-infinity}. Since
$a>2/\eps>2$, there exists $\nu\in\N$ such that
%%%
\begin{equation}\label{eq:proof-b-5}
C\eps^{-\gamma}\sum_{\ell=\nu}^\infty a^{-\ell\gamma}\le\frac{1}{2}.
\end{equation}
%%%
For $Q\in S_t$, let
\[
\alpha_Q:=\frac{1}{\mu(Q)}\int_{E(Q)}|g(x)|\,d\mu(x).
\]
Then for all $x\in X$,
%%%
\begin{align}
|(A_{S_t}^*g)(x)| 
&\leq 
\sum_{Q\in S_t}\alpha_Q\chi_Q(x)
\nonumber\\
&=
\sum_{\{j,k:Q_j^k\in S_t\}}\alpha_{Q_j^k}\chi_{Q_j^k}(x)
\nonumber\\
&=
\sum_{\{j,k:Q_j^k\in S_t\}}\alpha_{Q_j^k}\chi_{Q_j^k\setminus\Omega_{k+\nu}}(x)
+
\sum_{\{j,k:Q_j^k\in S_t\}}\alpha_{Q_j^k}\chi_{Q_j^k\cap\Omega_{k+\nu}}(x)
\nonumber\\
&=:\Sigma_1(x)+\Sigma_2(x),
\label{eq:proof-b-6}
\end{align}
%%%
where the sets $\Omega_k$ are defined by \eqref{eq:CZ-decomposition-4}
for all $k\in\Z$ satisfying \eqref{eq:CZ-decomposition-3}.

Let $\mathbb{K}$ be the set of all those $k\in\Z$ that satisfy 
\eqref{eq:CZ-decomposition-3}. It is easy to see that for $k\in\mathbb{K}$
and $\nu\in\N$,
%%%
\begin{equation}\label{eq:proof-b-7}
\Omega_k\setminus\Omega_{k+\nu}
\subset
\bigcup_{i=0}^{\nu-1}\Omega_{k+i}\setminus\Omega_{k+i+1}.
\end{equation}
%%%
It is also easy to see that if $k\in\mathbb{K}$ and $x\in Q_j^k$, then
%%%
\begin{equation}\label{eq:proof-b-8}
\alpha_{Q_j^k}
\le
\frac{1}{\mu(Q_j^k)}\int_{Q_j^k}|g(x)|\,d\mu(x)
\leq 
(M^\cD g)(x).
\end{equation}
%%%
Combining \eqref{eq:proof-b-7} and \eqref{eq:proof-b-8}, we get for $x\in X$,
%%%
\begin{align}
\Sigma_1(x) 
&=
\sum_{\{j,k:Q_j^k\in S_t\}}\alpha_{Q_j^k}
\chi_{Q_j^k\setminus\Omega_{k+\nu}}(x)
\nonumber\\
&\le 
(M^\cD g)(x)\sum_{k\in \mathbb{K}}
\chi_{\Omega_k\setminus\Omega_{k+\nu}}(x)
\nonumber\\
&\le 
(M^\cD g)(x)\sum_{i=0}^{\nu-1}
\sum_{k\in \mathbb{K}}\chi_{\Omega_{k+i}\setminus\Omega_{k+i+1}}(x)
\nonumber\\
&=\nu(M^\cD g)(x).
\label{eq:proof-b-9}
\end{align}
%%%
On the other hand, for $x\in X$, we have
%%%
\begin{align}
\Sigma_2(x) 
&=
\sum_{\{j,k:Q_j^k\in S_t\}}
\alpha_{Q_j^k}\chi_{Q_j^k\cap\Omega_{k+\nu}}(x)
\nonumber\\
&=
\sum_{\{j,k:Q_j^k\in S_t\}}
\alpha_{Q_j^k}\sum_{\ell=\nu}^\infty
\chi_{Q_j^k\cap(\Omega_{k+\ell}\setminus\Omega_{k+\ell+1})}(x)
\nonumber\\
&=
\sum_{\ell=\nu}^\infty
\sum_{\{j,k:Q_j^k\in S_t\}}
\alpha_{Q_j^k}
\chi_{Q_j^k\cap(\Omega_{k+\ell}\setminus\Omega_{k+\ell+1})}(x).
\label{eq:proof-b-10}
\end{align}
%%%
Since $S_t$ is a finite sparse family, applying inequality \eqref{eq:A-infty}
of Definition~\ref{def:A-infinity}, we obtain for all $\ell\ge\nu$,
%%%
\begin{align}
&
\left\|
\sum_{\{j,k:Q_j^k\in S_t\}} 
\alpha_{Q_j^k}\chi_{Q_j^k\cap(\Omega_{k+\ell}\setminus\Omega_{k+\ell+1})}
\right\|_\cE
\nonumber\\
&\quad\quad\le 
C\left(
\max_{\{j,k:Q_j^k\in S_t\}}
\frac{\mu\big(Q_j^k\cap(\Omega_{k+\ell}\setminus\Omega_{k+\ell+1})\big)}
{\mu(Q_j^k)}
\right)^\gamma \|A_{S_t}^*g\|_\cE.
\label{eq:proof-b-11}
\end{align}
%%%
By Corollary~\ref{co:CZ-corollary}, we get for all $\ell\ge\nu$,
%%%
\begin{align}
\max_{\{j,k:Q_j^k\in S_t\}}
\frac{\mu\big(Q_j^k\cap(\Omega_{k+\ell}\setminus\Omega_{k+\ell+1})\big)}
{\mu(Q_j^k)}
&\leq 
\max_{\{j,k:Q_j^k\in S_t\}}
\frac{\mu(Q_j^k\cap\Omega_{k+\ell})}{\mu(Q_j^k)}
\nonumber\\
&\le 
\max_{\{j,k:Q_j^k\in S_t\}}
\frac{\mu(Q_j^k)}{a^\ell\eps\mu(Q_j^k)}=\frac{1}{a^\ell\eps}.
\label{eq:proof-b-12}
\end{align}
%%%
It follows from \eqref{eq:proof-b-10}--\eqref{eq:proof-b-12} and 
\eqref{eq:proof-b-5} that
%%%
\begin{align}
\left\|\Sigma_2\right\|_\cE
&\leq
\sum_{\ell=\nu}^\infty\left\|
\sum_{\{j,k:Q_j^k\in S_t\}} 
\alpha_{Q_j^k}\chi_{Q_j^k\cap(\Omega_{k+\ell}\setminus\Omega_{k+\ell+1})}
\right\|_\cE
\nonumber\\
&\leq
C\sum_{\ell=\nu}^\infty\left(\frac{1}{a^\ell\eps}\right)^\gamma\|A_{S_t}^*g\|_\cE
\le\frac{1}{2}\|A_{S_t}^* g\|_\cE.
\label{eq:proof-b-13}
\end{align}
%%%
Combining inequalities \eqref{eq:proof-b-6}, \eqref{eq:proof-b-9}, and
\eqref{eq:proof-b-13}, we arrive at
\[
\|A_{S_t}^*g\|_\cE\le\nu\|M^\cD g\|_\cE+\frac{1}{2}\|A_{S_t}^*g\|_\cE.
\]
It follows from this inequality, Theorem~\ref{th:HK-pointwise}, and
the boundedness of the Hardy-Littlewood maximal operator on $\cE(X,d,\mu)$
that for all finite sparse families $S_t\subset S$ and all
$g\in\cE(X,d,\mu)$, one has
%%%
\begin{align}
\|A_{S_t}^*g\|_\cE 
&\leq 
2\nu\|M^\cD g\|_\cE
\leq
2\nu C_{HK}(X)\|Mg\|_\cE
\nonumber\\
& \leq
2\nu C_{HK}(X)\|M\|_{\cB(\cE)}\|g\|_\cE.
\label{eq:proof-b-14}
\end{align}
%%%
Combining \eqref{eq:proof-b-4} and \eqref{eq:proof-b-14} with 
\cite[Chap.~1, Lemma~2.8]{BS88}, we see that
%%%
\begin{align*}
\|A_{S_t}f\|_{\cE'}
&=
\sup\left\{\left|\int_X (A_{S_t}f)(x)g(x)\,d\mu(x)\right|\ :\ g\in\cE(X,d,\mu),\ 
\|g\|_\cE\le 1\right\}
\\
&\leq
2\nu C_{HK}(X)\|M\|_{\cB(\cE)}\|f\|_{\cE'}
\end{align*}
%%%
for all $t\in\N$. Passing in this inequality to the limit as $t\to\infty$ and 
taking into account \eqref{eq:proof-b-2}, we get
\[
\|A_Sf\|_{\cE'}\le 2\nu C_{HK}(X)\|M\|_{\cB(\cE)}\|f\|_{\cE'}.
\]
It follows from this inequality and inequality \eqref{eq:proof-b-1} that
%%%
\begin{equation}\label{eq:proof-b-15}
\|M^\cD f\|_{\cE'}\le 2a\nu C_{HK}(X)\|M\|_{\cB(\cE)}\|f\|_{\cE'}.
\end{equation}
%%%
for every dyadic grid $\cD\in\bigcup_{t=1}^K\cD^t$. In turn, inequality
\eqref{eq:proof-b-15} and Theorem~\ref{th:HK-pointwise} imply that
%%%
\begin{equation}\label{eq:proof-b-16}
\|Mf\|_{\cE'}\le 2a\nu KC_{HK}^2(X)\|M\|_{\cB(\cE)}\|f\|_{\cE'}
\end{equation}
%%%
for every nonnegative function $f\in L^1(X,d,\mu)\cap\cE'(X,d,\mu)$.

Now let $f\in\cE'(X,d,\mu)$ be an arbitrary complex-valued function. Since
$X$ is $\sigma$-finite, there are measurable sets $\{A_n\}_{n\in\N}$ such that
$\mu(A_n)<\infty$ for all $n\in\N$, $A_i\subset A_j$ for $i<j$, and 
$\bigcup_{n\in\N}A_n=X$. Let $f_n=|f|\chi_{A_n}$ for $n\in\N$. Then
$f_n\in L^1(X,d,\mu)\cap\cE'(X,d,\mu)$ for all $n\in\N$ in view of axiom (A5)
in the definition of a Banach function space. By \eqref{eq:proof-b-16},
for all $n\in\N$,
%%%
\begin{equation}\label{eq:proof-b-17}
\|Mf_n\|_{\cE'}\le 2a\nu KC_{HK}^2(X)\|M\|_{\cB(\cE)}\|f_n\|_{\cE'}.
\end{equation}
%%%
Since $f_n\uparrow|f|$ a.e., we have $Mf_n\uparrow Mf$ a.e.
(cf. \cite[Lemma~2.2]{CDF09}). Passing to the limit in inequality
\eqref{eq:proof-b-17}, we conclude from the Fatou property that inequality
\eqref{eq:proof-b-16} holds for all $f\in\cE'(X,d,\mu)$. Thus, the 
Hardy-Littlewood maximal operator $M$ is bounded on the space $\cE'(X,d,\mu)$
whenever it is bounded on the space $\cE(X,d,\mu)$ and the latter space 
satisfies the condition $\cA_\infty$.
\qed
%%%----------------------------------------------------------------------------
%\medskip
%\noindent {\bf Acknowledgment}. 
%%%----------------------------------------------------------------------------


\begin{thebibliography}{99}
\bibitem{ACM15}
T. C. Anderson, D. Cruz-Uribe, and K. Moen, 
{\em Logarithmic bump conditions for Calder\'on-Zygmund operators on spaces of 
homogeneous type},
Publ. Mat., Barc., {\bf 59} (2015), 17--43.

\bibitem{AHT17}
T. C. Anderson, T. Hyt\"onen, and T. Tapiola, 
{\em Weak $A_\infty$ weights and weak reverse H\"older property in a space of 
homogeneous type},
J. Geom. Anal. {\bf 27} (2017), 95--119.

\bibitem{BS88} 
C. Bennett and R. Sharpley, 
{\em Interpolation of Operators}, 
Academic Press, Boston (1988). 

\bibitem{C90a}
M. Christ, 
{\em A $T(b)$ theorem with remarks on analytic capacity and the Cauchy integral},
Colloq. Math., {\bf 60/61} (1990), 601--628.

\bibitem{C90b}
M. Christ, 
{\em Lectures on Singular Integral Operators}, 
Expository lectures from the CBMS regional conference held at the University 
of Montana, Missoula, MT (USA) from August 28-September 1, 1989. 
Regional Conference Series in Mathematics, {\bf 77}. 
American Mathematical Society, Providence, RI (1990).

\bibitem{CDF09}
D. Cruz-Uribe, L. Diening, and A. Fiorenza, 
{\em A new proof of the boundedness of maximal operators on variable Lebesgue spaces},
Boll. Unione Mat. Ital. (9), {\bf 2} (2009),151--173.

\bibitem{CMP11}
D. Cruz-Uribe, J. M. Martell, and C. P\'erez, 
{\em Weights, extrapolation and the theory of Rubio de Francia}, 
Operator Theory: Advances and Applications {\bf 215}. 
Birkh\"auser, Basel (2011).

\bibitem{CS18}
D. Cruz-Uribe and P. Shukla,
{\em The boundedness of fractional maximal operators on variable Lebesgue
spaces over spaces of homogeneous type},
Studia Math, {\bf 242} (2018), 109--139.

\bibitem{D05}
L. Diening, 
{\em Maximal functions on Musielak--Orlicz spaces and generalized Lebesgue spaces},
Bull. Sci. Math., {\bf 129} (2005), 657--700.

\bibitem{HK12}
T. Hyt\"onen and A. Kairema, 
{\em Systems of dyadic cubes in a doubling metric space},
Colloq. Math., {\bf 126} (2012), 1--33.

\bibitem{L17}
A. K. Lerner, 
{\em On a dual property of the maximal operator on weighted variable $L^p$ 
spaces},
Contemp. Math., {\bf 693} (2017), 288--300.

\bibitem{RF84}
J. L. Rubio de Francia, 
{\em Factorization theory and $A_p$ weights},
Amer. J. Math.,  {\bf 106} (1984), 533--547.


 \end{thebibliography}
\end{document}